\newtheorem{theorem}{Theorem}
\newtheorem{lemma}{Lemma}
\newtheorem{remark}{Remark}
\newcommand{\floor}[1]{\lfloor #1 \rfloor}
\newcommand{\norm}[1]{\left\lVert#1\right\rVert}
\newcommand{\dif}{\mathrm{d}}
\newcommand{\abs}[1]{\left|{#1}\right|}
\newcommand{\ii}{\mathrm{i}}
\newcommand{\x}{\textbf{x}}
\newcommand{\R}{\mathbb{R}}
\title{Fast Singular-Kernel Convolution on General Non-Smooth Domains via Truncated Fourier Filtering}
\author{Oscar Bruno \and Jinghao Cao\thanks{\footnotesize Computing and Mathematical Sciences Department, California Institute of Technology, Pasadena, CA 91125, USA (obruno@caltech.edu, jinghao.cao@caltech.edu)} }
\date{}
\begin{document}
\maketitle
\begin{abstract}
The rapid and accurate evaluation of convolutions with singular kernels plays crucial roles in a wide range of scientific and engineering applications. These include convolutions with $1/r^{\alpha}$ kernels arising in fractional diffusion, as well as $\log(r)$ and $1/r$-type singular kernels commonly encountered in potential theory, acoustics, electromagnetic scattering, and quantum mechanics. Building on the recently introduced Truncated Fourier Filtering method (TFF) for smooth kernels, and focusing on logarithmic singularities for definiteness, this work presents a fast, high-order numerical methodology that extends the approach to singular kernels and non-smooth domains. The method relies on truncated Fourier expansions of a prescribed order $F$ for the characteristic function of the integration domain as well as expansions of the products of certain 1D characteristic functions and singular functions. Combined with an $F$-dependent number of trapezoidal rule integration nodes, the algorithm achieves high-order accuracy in spite of the severe approximation errors typically associated with truncated Fourier expansions of discontinuous and singular functions. A comprehensive theoretical analysis is provided that explains the surprising and advantageous properties of the proposed approach. A variety of numerical examples demonstrate the method’s performance and effectiveness. 
\end{abstract}
\section{Introduction}
The rapid and accurate evaluation of convolutions integrals
\begin{equation}
\label{eq:integral}
    \int_{\Omega} K (x-y) \phi(y) \dif y,
\end{equation}
with singular kernels is a 
fundamental challenge that arises across a broad spectrum of scientific and 
engineering applications. Problems involving $1/r^{\alpha}$ kernels in fractional 
diffusion, as well as logarithmic and $1/r$-type singular kernels in potential 
theory, acoustics, electromagnetic scattering, and quantum mechanics, all require 
numerical treatments that remain efficient and reliable in the presence of 
singular behavior and geometric complexity. Motivated by these demands, we build upon the recently introduced
smooth-kernel Truncated Fourier Filtering (TFF) methodology
of~\cite{br_pooj_2025}, whose order of accuracy deteriorates in the
presence of non-smooth kernels, and extend it to handle singular kernels
and non-smooth domains with high-order accuracy. For smooth kernel
functions, the TFF approach evaluates the associated convolution
integrals on the basis of an equispaced mesh over the convolution
domain—regardless of the domain’s geometric complexity, including cases
involving corners or even cusps. The method relies on truncated Fourier
expansions of prescribed order $F$ for the characteristic function of
the integration domain, as well as expansions of products of certain
one-dimensional characteristic functions with singular factors. The
present contribution extends the TFF framework to enable high-order
evaluation of convolution integrals with singular kernels. In addition
to introducing the method, we analyze its key properties and demonstrate
its performance through representative numerical examples.

As suggested above, the evaluation of integrals of the form
\eqref{eq:integral} with singular kernels plays a central role in
numerical analysis, applied mathematics, and numerous applications.
Recent developments have been driven by challenges arising in boundary
integral equation formulations, high-contrast media, and time-domain
scattering problems, where singular kernels occur naturally and
computational efficiency is essential. A wide variety of methodologies
have been proposed for the fast and accurate evaluation of
convolution-type integrals with singular kernels.

Early work on endpoint-corrected trapezoidal rules and high-order
quadratures~\cite{rokhlin1990end,alpert1995high,kapur1997high,alpert1999hybrid}
achieved high-order accuracy for algebraic and logarithmic
singularities, though in the comparatively simpler setting of
one-dimensional integration. The method of~\cite{BrunoKunyansky2001}
introduced polar changes of variables centered at the singularity to
isolate the radial singular behavior; combined with a global partition
of unity, this approach enabled highly accurate quadratures even in the
presence of geometric irregularities on general smooth and non-smooth
two-dimensional surfaces in three-dimensional space. Subsequent
developments, such as the Chebyshev-based rectangular–polar
method~\cite{bruno2020chebyshev}, removed the need for an overall
partition of unity and demonstrated fast, high-order performance.

Other approaches include the density-interpolation
method~\cite{perez2019planewave}, which mollifies the kernel singularity
by subtracting an interpolant of the density, thereby enabling
high-order convergence, and the Quadrature by Expansion (QBX)
technique~\cite{BarnettGreengard2011}, which constructs local analytic
expansions of layer potentials to obtain spectrally accurate evaluations
near or on boundaries for smooth geometries. Fast convolution
methods~\cite{vico2016fast} employ smooth approximations of the
underlying Green function, achieving high-order accuracy for singular
convolution integrals over the full space $\mathbb{R}^2$, provided the
convolution density is defined and smooth throughout $\mathbb{R}^2$.

Additional contributions include the highly effective algorithms of
\cite{helsing2009integral,helsing2022solving,helsing2013solving} for
one-dimensional boundary Fredholm second-kind equations on non-smooth
curves. The method of~\cite{helsing2022solving}, in particular, relies
on the RCIP framework, which involves nontrivial preconditioner
construction, recursive refinement, and specialized discretizations. This
machinery is powerful, but it renders the approach difficult to adopt
and less straightforward to generalize to two-dimensional singular
integrals. Finally, FMM-accelerated Poisson solvers based on pixel-wise
smooth extensions~\cite{fryklund2023fmm} have been demonstrated for
slowly oscillatory convolution densities.

To summarize, despite their strengths, existing methods typically
require one of the following: (i) carefully engineered local coordinate
transformations, (ii) analytic expansions centered at boundary points,
or (iii) densities defined throughout all of space. In contrast, the
methodology developed in this work preserves the core philosophy of the
TFF approach: it avoids geometric reparameterization, operates entirely
on equispaced grids, and exploits truncated Fourier representations,
even for discontinuous or singular factors, to achieve high-order
accuracy. This results in a conceptually simple, grid-based algorithm
that is robust for arbitrary domains, including those with corners or
limited smoothness, and attains high-order accuracy for a broad class of
singular kernels. Similar to~\cite{BrunoKunyansky2001}, our approach
also employs local integration in polar coordinates; however, the
underlying mechanism is fundamentally different. Rather discretely evaluating the integral in a polar coordinate system,
the TFF methodology leverages truncated Fourier expansions of
characteristic functions and singular factors.

This paper is organized as follows. Section~\ref{sec:preliminaries}
presents the necessary background concerning the smooth formulation
of the TFF method developed in~\cite{br_pooj_2025}. Section~\ref{sec:decomposition}
then extends these results and establishes the convergence rate of the
truncated Fourier filtering method for general singular kernels
(Theorem~\ref{mainlemma}). This section also introduces a windowed
decomposition of the convolution integral into two components and
describes algorithmic strategies for evaluating each contribution.
Within the support of the window function, the logarithmic singularity
is replaced by its Fourier series, for which the TFF method guarantees rapid convergence; the associated angular integral is handled through a
somewhat delicate application of the same technique. Outside the window
support, the integrand is smooth, and an additional use of the TFF
strategy, combined with the Fast Fourier Transform, yields a fast,
high-order evaluation of the remaining contribution. Finally,
Section~\ref{sec:numerics} summarizes the complete algorithm and
presents numerical results illustrating the accuracy and efficiency of
the proposed methodology.

\section{Preliminaries}
\label{sec:preliminaries}
Throughout this work we denote by $Q_{N,[a,b]}$ the discrete trapezoidal-rule quadrature operator over the interval $[a,b]$ based of the $N+1$ quadrature points $x_k = a + \frac{k}{N}(b-a), k=0,\dots, N$---so that,  for, say, a piecewise continuous function $\varphi$ defined in the interval $[a,b]$ we have
\begin{equation}
    \label{eq:trapz}
    \int_{a}^b \varphi(b)\ \dif x  \approx Q_{N,[a,b]}\left[ \varphi \right] := \frac{b-a}{N}\sum_{k=0}^{N-1} \frac{\varphi(x_k)+\varphi(x_{k+1})}{2}.
\end{equation}
If $\Phi$ is a function of two real variables, we denote the 2D trapezoidal rule on the rectangle $[a_1,b_1]\times [a_2,b_2] \subset \mathbb{R}^2$ by 
\begin{equation}
    \label{eq:2dtrapz}
        Q_{N_2,[a_2,b_2]}^2[Q_{N_1,[a_1,b_1]}^1[\Phi]],
\end{equation}
where $Q_{N_i,\in[a_i,b_i]}^i(i=1,2)$ denotes the one-dimensional trapezoidal rule in the $i$-th variable.

In one-dimension, the following Lemma expresses the error in the trapezoidal rule approximation of the integral in~\eqref{eq:trapz}, with $a = -\pi$ and $b=\pi$, in terms of the Fourier coefficients $\varphi$ in the interval $[-\pi,\pi]$.
\begin{lemma}
\label{lem:trapz}
    Let $\varphi$ denote a piecewise continuous function on $[-\pi,\pi]$, with Fourier series $\varphi(x) = \sum_{k\in\mathbb{Z}}\varphi_{k}e^{\ii k\pi}$. Then, the trapezoidal-rule quadrature error is given by
    \begin{equation}
        Q_{N,[-\pi,\pi]}[\varphi] - \int_{-\pi}^\pi \varphi(x)\ \dif x = 2\pi\sum_{i\in \mathbb{N}\setminus \{0\}}\phi_{iN}.
    \end{equation}
\end{lemma}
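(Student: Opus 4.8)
The plan is to recognize Lemma~\ref{lem:trapz} as the classical \emph{aliasing} (or discrete Poisson-summation) identity for the periodic trapezoidal rule, and to prove it by inserting the Fourier series of $\varphi$ into the quadrature sum and evaluating the resulting finite geometric sums mode by mode.

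First I would simplify the quadrature operator. With $h = 2\pi/N$ and nodes $x_k = -\pi + kh$, the averaged form in~\eqref{eq:trapz} simplifies for a $2\pi$-periodic integrand: since $\varphi(x_{-1}) = \varphi(x_{N-1})$, the two shifted node-sums coincide, so that $Q_{N,[-\pi,\pi]}[\varphi] = h\sum_{k=0}^{N-1}\varphi(x_k)$, the plain periodic rule. I would also record the elementary fact that $\int_{-\pi}^\pi \varphi(x)\,\dif x = 2\pi\varphi_0$, so that the asserted identity is equivalent to the statement that the quadrature reproduces $2\pi\varphi_0$ exactly and contributes, beyond it, $2\pi$ times the sum of the Fourier coefficients at the nonzero multiples of $N$.

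The heart of the argument is the per-mode computation. Substituting $\varphi(x) = \sum_{m\in\mathbb{Z}}\varphi_m e^{\ii m x}$ and interchanging the two summations, everything reduces to evaluating $Q_{N,[-\pi,\pi]}[e^{\ii m\,\cdot}]$. Writing this as a finite geometric series in $\omega = e^{\ii m h}$, the sum equals $N$ when $m$ is an integer multiple of $N$ and vanishes otherwise, because $\omega^N = 1$ while $\omega \neq 1$ off the resonant modes. Thus only the modes $m = jN$ survive, each contributing $2\pi$ up to the base-point phase $e^{-\ii m \pi} = (-1)^m$, which equals $1$ precisely when $m$ is even and hence for all resonant modes once $N$ is even. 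Separating the $j=0$ term as the exact integral and collecting the remaining resonant modes then yields the claimed error formula.

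The main obstacle is analytic rather than algebraic: justifying the interchange of the infinite Fourier sum with the finite node sum for a merely piecewise continuous $\varphi$, whose Fourier series need not converge absolutely or uniformly and may even be ambiguous at a node landing on a jump. The cleanest route is to impose (or to note that the intended applications supply) enough regularity---e.g. $\sum_m |\varphi_m| < \infty$---so that the series converges uniformly and term-by-term evaluation at the nodes is immediate; more generally one can argue by Abel/Ces\`aro summation, or first establish the identity on a dense class of smooth functions and pass to the limit. A secondary bookkeeping point is to track the phase $(-1)^m$ arising from nodes anchored at $-\pi$ rather than at $0$, which is exactly what must be controlled for the resonant contributions to collapse cleanly to $2\pi\varphi_{jN}$.
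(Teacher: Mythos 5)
Your proposal takes essentially the same route as the paper's proof: expand $\varphi$ in its Fourier series and reduce everything to the aliasing identity $Q_{N,[-\pi,\pi]}\left[e^{\ii k x}\right] = 2\pi$ for $k$ a multiple of $N$ and $0$ otherwise. You are in fact somewhat more careful than the paper on two points it glosses over---the base-point phase $(-1)^k$ from anchoring the nodes at $-\pi$ (harmless in the intended application since $N=2F$ is even) and the justification for interchanging the infinite Fourier sum with the point-evaluating quadrature, which the paper's appeal to $L^2$ convergence alone does not strictly supply.
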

\begin{proof}
  In view of the convergence of the Fourier series of $\varphi$ in
  $L^2[-\pi,\pi]$, the proof follows easily from the exact expression
  for the action of $Q_{N,[-\pi,\pi]}$ on the Fourier basis function
  $E_k(x):=e^{\ii k x}$:
    \begin{equation}
         Q_{N,[-\pi,\pi]} (E_k) 
        =\frac{2\pi}{N} \sum_{j=0}^{N-1} e^{\frac{2\pi \ii k}{N}j} =\begin{cases}
            2\pi & \text{if $k$ is a multiple of $N$},\\
            0 &\text{otherwise}.
        \end{cases}
    \end{equation}
\end{proof}

\begin{theorem}\cite{br_pooj_2025}
\label{thm:krishna}
For $P\in \mathbb{N}$, let $K$, $K_j$ denote compact sets with a Lipschitz boundary such that $ K=\bigcup_{j=1}^{P}K_j$ and $D\supset K$ a rectangle in $\R^m$. Further, let   $f\in C_\mathrm{pw}^{\infty}(K)$ be such that the restriction $f_j = f|_{K_j}$ of $f$ to the set $K_j$ satisfies $f_j\in C^{\infty}(K_j)$, and let $\widetilde{f}_j$ be a smooth and $D$-periodic extension of $f_j$. Let $F\in\mathbb{N}^{m}$, $n\in\mathbb{N}^{m}$ such that
$n_{r} \geq 2F_r$ for each $1\leq r\leq m$. Then, for any positive integer $p$ there is a constant $C$ such that the total error satisfies
$$\left|\int_{D}\widetilde{f}(\x) d\x -
T_{n}^{m,D}\left[\chi_{K}^{F}\widetilde{f}\right]\right| \leq C\frac{\log(F_{\mathrm{max}})}{F_{\mathrm{min}}^{p}},$$
where $ F_{\mathrm{min}} = \min_{1\leq r\leq m}\{F_r\}$ and $F_{\mathrm{max}} = \max_{1\leq r\leq m}\{F_r\}$.
\end{theorem}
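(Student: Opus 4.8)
The plan is to bound the total error by splitting it into a Fourier-\emph{truncation} error and a trapezoidal \emph{aliasing} error, controlling each with two coefficient estimates together with the hypothesis $n_r\ge 2F_r$. First I would reduce to a single piece: since $K=\bigcup_{j=1}^P K_j$ and $f$ is only piecewise smooth, the filtered integrand is read as $\sum_{j=1}^P\chi_{K_j}^F\widetilde f_j$, and the exact target is $\int_K f=\sum_j\int_D\chi_{K_j}\widetilde f_j$ (using $\widetilde f_j|_{K_j}=f_j$); the triangle inequality then reduces the claim to the case $P=1$, with $C$ absorbing the factor $P$. Writing $\widehat\chi_{\mathbf k}$ and $\widehat{\widetilde f}_{\mathbf k}$ for the $D$-periodic Fourier coefficients of $\chi_K$ and $\widetilde f$, I would write
\begin{equation*}
\int_D \chi_K\widetilde f - T_n^{m,D}\!\left[\chi_K^F\widetilde f\right] = \underbrace{\int_D(\chi_K-\chi_K^F)\widetilde f}_{E_1} + \underbrace{\Bigl(\int_D \chi_K^F\widetilde f - T_n^{m,D}\!\left[\chi_K^F\widetilde f\right]\Bigr)}_{E_2}.
\end{equation*}
The two estimates I would establish are: (a) $\abs{\widehat\chi_{\mathbf k}}\le C/\abs{\mathbf k}$ for $\mathbf k\neq 0$, obtained by applying the divergence theorem to $\int_K e^{-\ii\mathbf k\cdot\x}\,\dif\x$ and bounding the resulting integral over the Lipschitz boundary $\partial K$; and (b) the super-algebraic decay $\abs{\widehat{\widetilde f}_{\mathbf k}}\le C_q(1+\abs{\mathbf k}_\infty)^{-q}$ for every $q$, from repeated integration by parts using the smoothness and periodicity of $\widetilde f$.

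For $E_1$, orthogonality of the Fourier basis gives $E_1=\abs{D}\sum_{\abs{\mathbf k}_\infty>F}\widehat\chi_{\mathbf k}\,\overline{\widehat{\widetilde f}_{\mathbf k}}$. Inserting (a) and (b) with $q=p$, the rapid decay of $\widehat{\widetilde f}$ localizes the tail sum to $\abs{\mathbf k}_\infty\sim F_{\mathrm{min}}$, yielding $\abs{E_1}\le C/F_{\mathrm{min}}^{p}$ with no logarithmic loss.

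For $E_2$, the $m$-dimensional analogue of Lemma~\ref{lem:trapz} expresses the quadrature error as a sum over aliased modes, $E_2=\abs{D}\sum_{\mathbf j\neq 0}\widehat g_{\,n\odot\mathbf j}$ with $g=\chi_K^F\widetilde f$ and $\widehat g_{\mathbf m}=\sum_{\abs{\mathbf l}_\infty\le F}\widehat\chi_{\mathbf l}\,\widehat{\widetilde f}_{\mathbf m-\mathbf l}$, where $n\odot\mathbf j=(n_1 j_1,\dots,n_m j_m)$. Here the hypothesis $n_r\ge 2F_r$ is essential: for any aliased mode and any coordinate $r$ with $j_r\neq 0$ one has $\abs{n_r j_r-l_r}\ge n_r-F_r\ge F_r$, so the smooth factor $\widehat{\widetilde f}_{\,n\odot\mathbf j-\mathbf l}$ is always evaluated outside the band of $\chi_K^F$ and is super-algebraically small in the "active" coordinates. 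Summing first over $\mathbf j$ (which nearly factorizes across coordinates, up to removal of the $\mathbf j=0$ term) and then over the band $\abs{\mathbf l}_\infty\le F$ with the weight $1/\abs{\mathbf l}$ from (a), the active coordinate supplies the factor $F_{\mathrm{min}}^{-p}$, while the undamped "inactive" coordinates produce a critical, logarithmically divergent sum $\sum 1/\abs{\mathbf l}$ bounded by $C\log(F_{\mathrm{max}})$. This is precisely the source of the logarithm, and combining the two contributions gives $\abs{E_2}\le C\log(F_{\mathrm{max}})/F_{\mathrm{min}}^{p}$; adding $E_1$ completes the estimate.

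The main obstacle is this last anisotropic lattice bookkeeping in $E_2$: correctly separating the active and inactive coordinates of the aliased modes, verifying that the slowly-decaying ($1/\abs{\mathbf k}$) coefficients of the characteristic function contribute only a logarithmic—rather than polynomial—factor, and confirming that the \emph{decay rate} is governed by $F_{\mathrm{min}}$ while the \emph{logarithm} is governed by $F_{\mathrm{max}}$. A secondary technical point is that the clean factorization above uses tensor-product decay of $\widehat{\widetilde f}$; if only the isotropic bound (b) is available, the same conclusion follows after a slightly more careful splitting of the sum according to which coordinate realizes $\abs{\mathbf k}_\infty$.
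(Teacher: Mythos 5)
Your proposal is correct in outline and follows essentially the same route as the paper, with the important caveat that the paper itself proves this statement only in its one-dimensional form (Theorem~\ref{maintheorem}), deferring the full anisotropic case to the cited reference \cite{br_pooj_2025}. Your $E_1$/$E_2$ split is exactly the paper's decomposition in~\eqref{eq:estimate}: the truncation term is bounded by pairing the super-algebraic decay of the smooth factor's coefficients (Lemma~\ref{lem1}, your estimate (b)) against the rough factor's coefficients, and the aliasing term is handled by the trapezoidal-error identity of Lemma~\ref{lem:trapz} together with the hypothesis $N=2F$ (your $n_r\ge 2F_r$), which forces every aliased index $kN-l$ with $\abs{l}\le F$ to satisfy $\abs{kN-l}\ge(2\abs{k}-1)F$ — precisely your observation that the smooth coefficients are always evaluated outside the band of $\chi_K^F$. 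Your reduction to $P=1$ and your divergence-theorem bound $\abs{\widehat\chi_{\mathbf k}}\le C/\abs{\mathbf k}$ are both sound (the paper's 1D proof uses only boundedness of the rough coefficients, so your (a) is strictly extra information). The one soft spot is the part the paper does not prove: your derivation of the $\log(F_{\mathrm{max}})$ factor, via the sum $\sum 1/\abs{\mathbf l}$ over the inactive coordinates, closes as stated only for $m=2$, where there is a single inactive coordinate and $\sum_{\abs{l}\le F}(1+\abs{l})^{-1}=O(\log F_{\mathrm{max}})$; for $m\ge 3$ the weight $1/\abs{\mathbf l}$ summed over an $(m-1)$-dimensional band gives $O(F^{m-2})$ (or, at best, $(\log F_{\mathrm{max}})^{m-1}$ under a tensor-product bound), so obtaining a single first-power logarithm requires finer, directional decay estimates for $\widehat\chi_{\mathbf k}$ than your estimate (a) provides. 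You flag this "anisotropic lattice bookkeeping" yourself as the main obstacle, and it is indeed the only point where your sketch falls short of the cited proof; for the two-dimensional setting actually used in this paper, your argument is complete.
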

The present paper utilizes a generalized version of Theorem~1, which extends the result to general singular functions $g$ with an integrable singularity instead of the characteristic function $\chi_K^F$ considered in that theorem. We state and prove the more general result in what follows in the 1D case---the only version of this theorem needed in this paper. A proof in higher dimensions may obtained along the lines of the proof of~\cite[Theorem 1]{br_pooj_2025}.

\section{Window decomposition of the integral}
\label{sec:decomposition}
Throughout the paper the results and ideas are illustrated by consideration of the 2D domain $\Omega$ bounded by the piecewise-smooth but non-smooth curve
\begin{equation}
    {\partial \Omega} = \{(l_x\sin(t/2),-l_y\sin(t))\ |\ t\in [0,2\pi]\}
\end{equation}
with $l_x,l_y>0$; as shown in what follows, however,  the proposed method  and ideas can be applied to any  domain with a piecewise-smooth boundary. 
\begin{figure}[ht]
    \centering
    \includegraphics[width=0.5\linewidth]{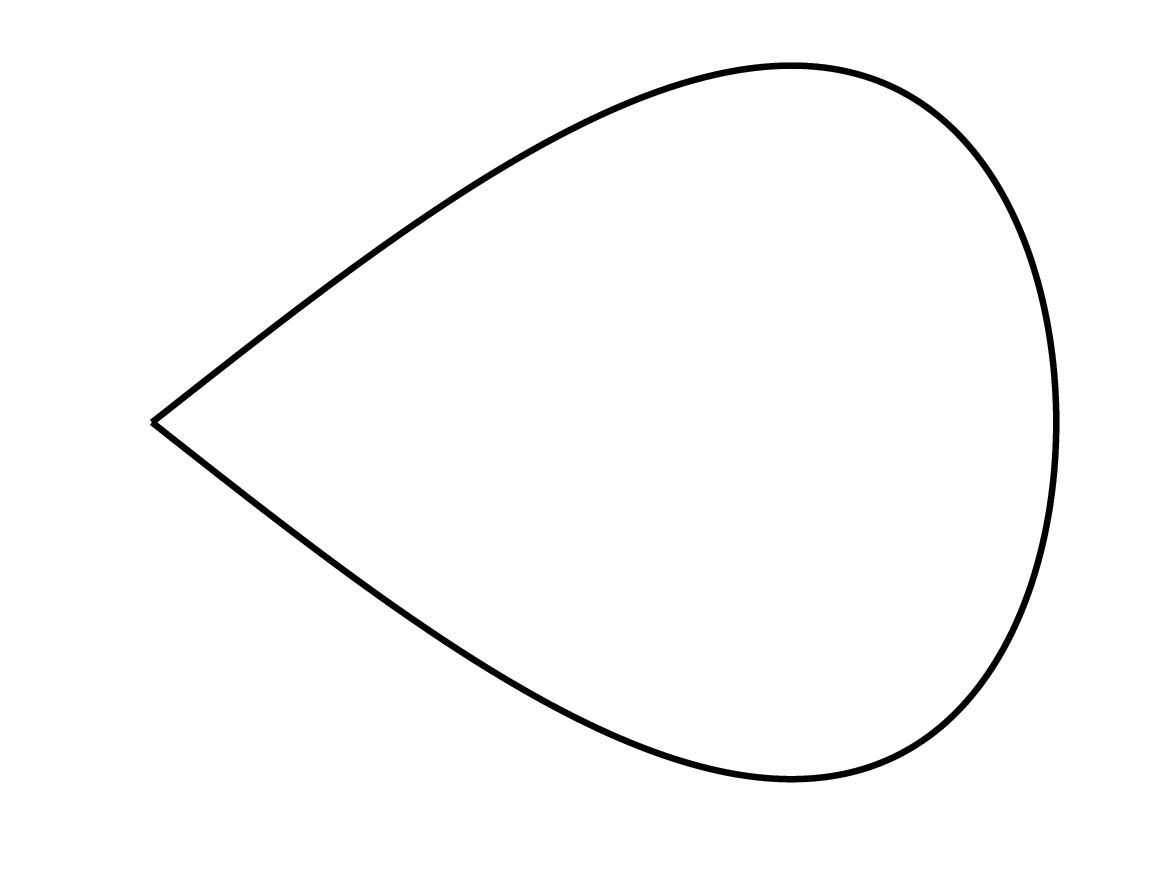}
    \caption{Illustration of a drop shaped domain utilized in the description of the proposed TFF algorithm, with parameterization given by $ {\partial\Omega} = \{(3\sin(t/2),-2\sin(t))\ |\ t\in [0,2\pi]\}$. }
\label{fig:teardrop}
\end{figure}

In what follows we focus on the evaluation of the convolution integral
\begin{equation}
   \int_\Omega \log \abs{x-y} \phi(y)\ \dif y
\end{equation}
To isolate for the logarithmic singularity we utilize the radial window function which, 
for $x\in\Omega$, is given by
\begin{equation}
\label{eq:windowfunction}
    W_1(r) = 
    \begin{cases}
        1&\ 0 \leq r  < w_0, \\
        \mathrm{exp}(\frac{2 \mathrm{exp}(-1/u)}{u-1})&\ w_0 \leq r  \leq w_1,  \\
        0&\ w_1 < r,
    \end{cases}
    \text{with } u = \frac{\abs{r}-w_0}{w_1-w_0},
\end{equation}
where  $w_0$ and $w_1$ are the ``window widths'' ($0< w_0 < w_1$.) Using the window function $W_1$ we decompose the integral as the sum of two terms
\begin{equation}
\label{eq:splitmainintegral}
\begin{split}
    &\int_\Omega \log \abs{x-y} \phi(y)\ \dif y\\
    =& \int_\Omega \log \abs{x-y} \phi(y) W_1(\abs{x-y})\ \dif y + \int_\Omega \log \abs{x-y} \phi(y) (1-W_1(\abs{x-y}))\ \dif y.
\end{split}
\end{equation}
Calling
\begin{equation}\label{I1I2}
  I_1(x)  = \int_\Omega \log \abs{x-y} \phi(y) W_1(\abs{x-y})\ \dif y \quad\mbox{and}\quad I_2(x)  = \int_\Omega \log \abs{x-y} \phi(y) (1-W_1(\abs{x-y}))\ \dif y,
\end{equation}
in the next two sections we discuss the computational methods used to obtain these two contributions to the convolution integral.
\begin{figure}[ht]
\begin{subfigure}[b]{0.5\textwidth}
\includegraphics[width=\linewidth]{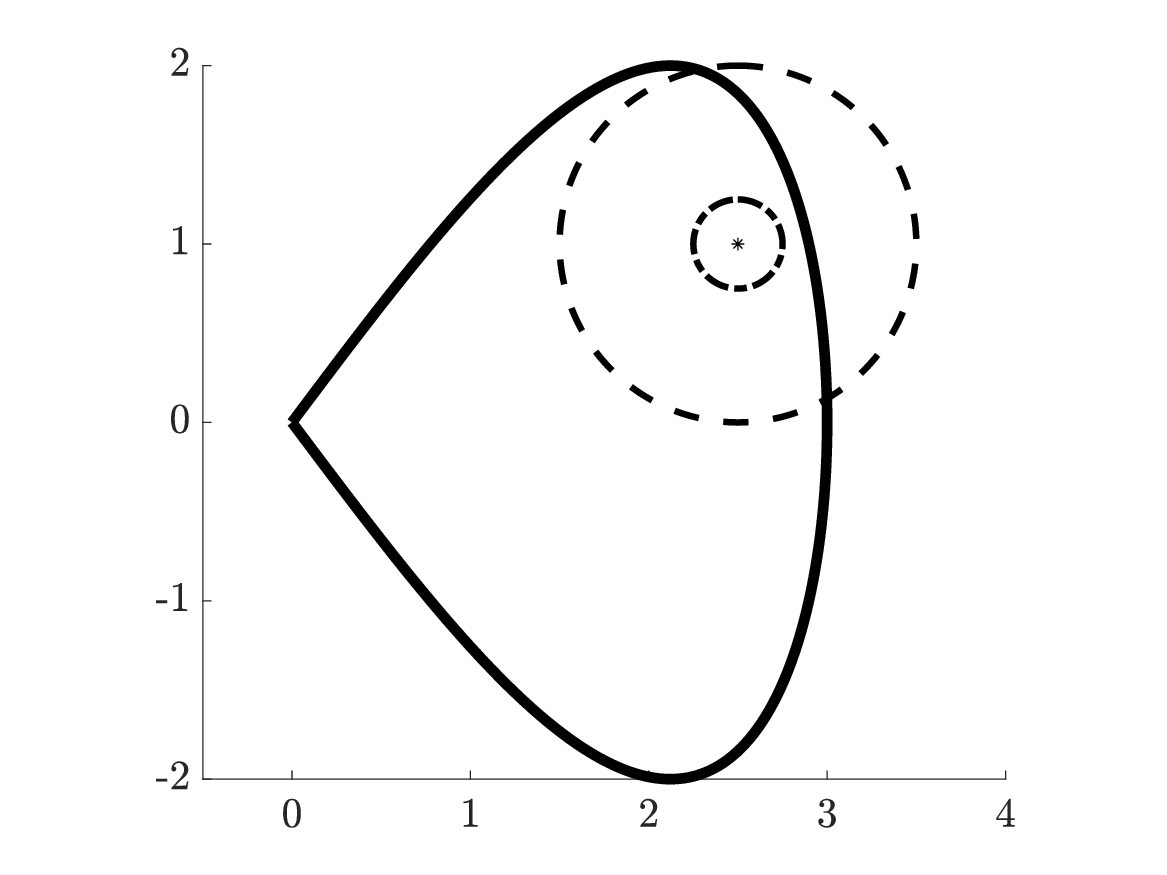}
\caption{Visualization of $W_1$ at $x=(2.5, 1)$.}
\end{subfigure}
\begin{subfigure}[b]{0.48\textwidth}
    \includegraphics[width=\linewidth]{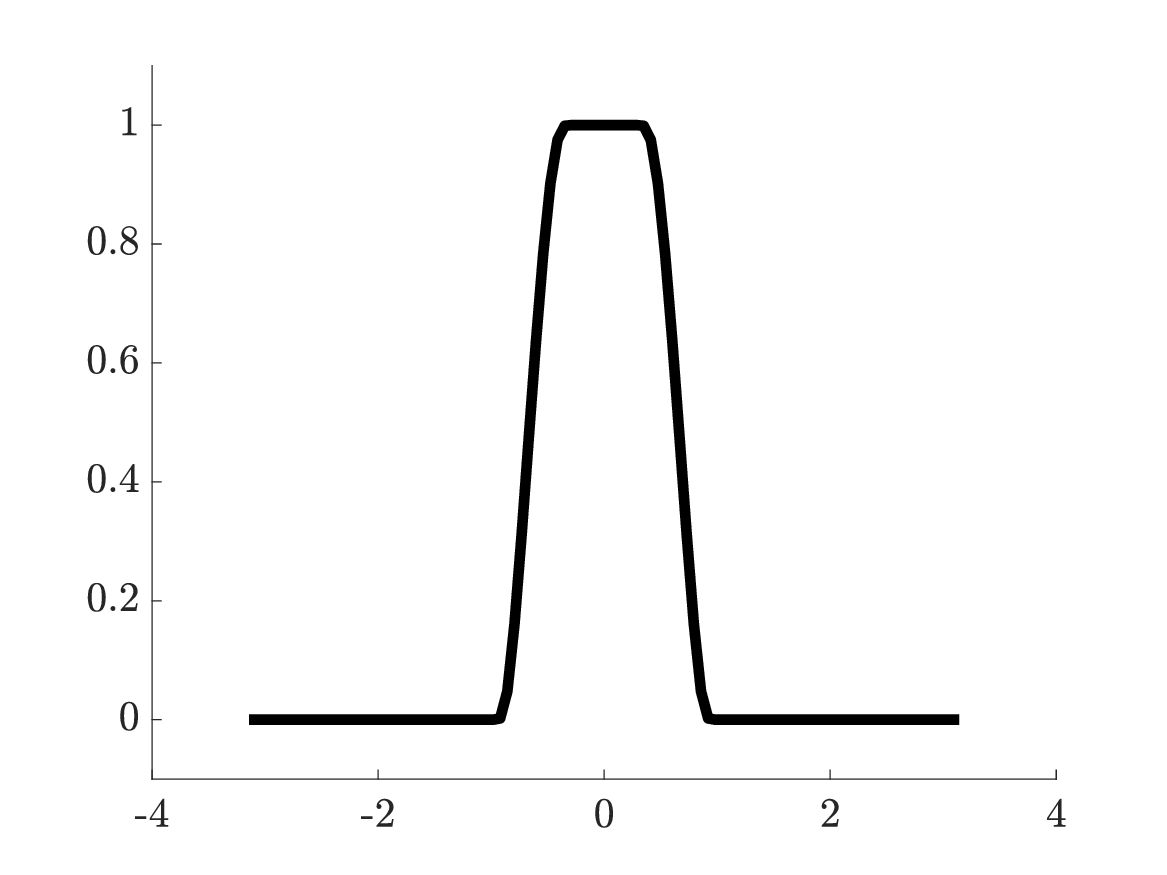}
\caption{$W_1$ as a function of $r\in[\pi,\pi]$.}
\end{subfigure}
\caption{Illustration of the window function $W_1$ with window widths $w_0 = 1/4$ and $w_1 = 1$, in the context of the domain $\Omega$ depicted in Figure~\eqref{fig:teardrop}. In this case, with singularity point $x=(2.5,1)$, the support of the window function intersects a smooth section of the boundary of $\Omega$. For other points $x\in\Omega$, the support may intersect a section of the boundary containing the corner point, or not intersect the boundary at all. }
\end{figure}

Our method for the evaluation of the integral $I_1(x)$, which is described in Section~\ref{sec:windowpart1}, hinges on a crucial generalization of Theorem~\ref{thm:krishna}, for the integral $\int_{\mathbb{R}^m} \chi_K(x)f(x)dx$---which broadens its scope to encompass integrals of the form $\int_{\mathbb{R}^m} \chi_K(x) g(x)f(x)dx$ including potentially singular functions $g$ such as $g(x) = \log|x|$. For simplicity we present this result in the case $m=1$ that is needed in the iterated integration strategy presented in this section.
\begin{lemma}
\label{mainlemma}
    Let $f$ be a periodic function on $[-\pi,\pi]$ of class $C^\infty$, and $g$ a periodic function on $[-\pi,\pi]$. Let $F\in\mathbb{N}$ and $N = 2F$. Denote the truncated Fourier series of $g$ by
\begin{equation}\label{trunc-log}
     g_F(x) = \sum_{n= -F}^F g_n \exp(\mathrm{i}{n}x).
\end{equation}
For any $p\in\mathbb{N}$, the following error estimate holds:
\begin{equation}
    \abs{\int_{-\pi}^\pi f(x) g(x)\ \dif x - Q_{N,[-\pi,\pi]} \left[f(x) g_F(x)\right] } = O(F^{-p}).
\end{equation}
\end{lemma}
\begin{proof}
A complete proof of a specific function $g$ is presented in \cite{br_pooj_2025}. In what follows we briefly outline a proof in the one dimensional case; the proof for higher dimensions follows along the lines of \cite{br_pooj_2025}.
We evaluate the integral
\begin{equation}
    \int_{-\pi}^\pi f(x) g(x)\ \dif x
\end{equation}
by substituting $g(x)$ by its Fourier series
\begin{equation}
     g(x) = \sum_{n= -\infty}^\infty g_n \exp(\mathrm{i}{n}x).
\end{equation}
The corresponding error is given by
\begin{equation}
\label{eq:estimate}
\begin{split}
    &\abs{\int_{-\pi}^\pi f(x) g(x)\ \dif x - Q_{N,[-\pi,\pi]} \left[f(x) g_F(x)\right] } \\
     \leq & \abs{\int_{-\pi}^\pi f(x) g(x)\ \dif x -\int_{-\pi}^\pi f(x) g_F(x)\ \dif x} + \abs{\int_{-\pi}^\pi f(x) g_F(x)\ \dif x-Q_{N,[-\pi,\pi]} \left[f(x) g_F(x)\right] }.
\end{split}
\end{equation}
The first term on the right-hand side of \eqref{eq:estimate} may be estimated as follows:
\begin{equation}
\label{eq:estimatepart1}
    \begin{split}
         & \abs{\int_{-\pi}^\pi f(x) g(x)\ \dif x -\int_{-\pi}^\pi f(x) g_F(x)\ \dif x} = \abs{\int_{-\pi}^\pi f(x) \sum_{\abs{n} >F} g_n \exp(\mathrm{i}{n}x) \ \dif x}\\
        =& \abs{\sum_{\abs{n} >F} g_n \int_{-\pi}^\pi f(x)  \exp(\mathrm{i}{n}x) \ \dif x} \leq  \sum_{\abs{n}>F} \abs{g_n}\abs{\frac{C_{f}}{n^{p+1}}} \leq C_1 \sum_{\abs{n}>F} \frac{1}{n^{p+1}}\\ \leq& C_2 \int_{F}^{\infty} \frac{1}{x^{p+1}} \ \dif x = C_3 F^{-p}.
    \end{split}
\end{equation}
where we use the estimate provided by Lemma \ref{lem1}. Using Lemma \ref{lem:trapz}, in turn, the second right-hand term in \eqref{eq:estimate} may be expressed in the form
\begin{equation}
    \label{eq:estimatepart2}
    \int_{-\pi}^\pi f(x) g_F(x)\ \dif x-Q_{N,[-\pi,\pi]} \left[f(x) g_F(x)\right] =  2\pi \sum_{k\in \mathbb{Z}_{\ne 0}} h_{kN},
\end{equation}
where $h(x) = f(x) g_F(x)$, with
\begin{equation}
    h_{kN} = \sum_{\abs{\ell}\leq F}f_{kN-\ell} {g}_{\ell}.
\end{equation}
Thus \eqref{eq:estimatepart2} becomes 
\begin{equation}
\begin{split}
     & \abs{2\pi \sum_{k\in\mathbb{Z}_{\ne 0}}\sum_{\abs{\ell}\leq F} g_lf_{kN-\ell}} 
     \leq  C_4 \sum_{k\in \mathbb{Z}_{\ne 0}} \sum_{\abs{\ell}\leq F} \frac{1}{\abs{kN-\ell}^p} \\
     \leq & C_5 \left(\sum_{k\in \mathbb{Z}_{>0}} \frac{1}{\abs{kN-F}^p} +  \sum_{k\in \mathbb{Z}_{<0}} \frac{1}{\abs{kN+F}^p}\right) \\
     \leq & C_6 \left( \sum_{k\in \mathbb{Z}_{>0}} \frac{1}{\abs{kN-F}^p} \right) 
    \leq C_7 \sum_{k\in\mathbb{Z}_{>0}} \frac{1}{\abs{ks-1}^p} \frac{1}{F^p} \leq C_8 F^{-p}.
\end{split}
\end{equation}

\end{proof}
\subsection{Computation of \texorpdfstring{$I_1(x)$}: windowed singularity}
\label{sec:windowpart1}
The integrand of the integral $I_1$ in~\eqref{I1I2} for a given $x=(x_1,x_2)\in \Omega$ has a singularity at $y=x$---which, as is well known, poses a significant challenge for numerical evaluation. 
To tackle this difficulty we first re-express the integral in polar coordinates. Letting $y-x = r(\cos(\theta),\sin(\theta))$ with $\theta\in [-\pi,\pi]$, the integral becomes
\begin{equation}
\label{eq:1.1polar}
    I_1(x)  = \int_{-\pi}^\pi \int_0^{d(\theta)} r \log \abs{r} \phi(x+r(\cos(\theta),\sin(\theta))) W_1(r) \ \dif r \dif \theta.
\end{equation}
Here, utilizing the window-width parameter $w_1$ introduced in \eqref{eq:windowfunction}, and employing the convention $\mathrm{dist}(x,\emptyset) = +\infty$, the distance function $d(\theta)$ is defined as
\begin{equation}
\label{eq:distancefunction}
    d(\theta) = \min(w_1, \mathrm{dist}(x,S_r(\theta)\cap \partial\Omega)).
\end{equation}
where $S_r(\theta)$ is defined as the line segment 
\begin{equation}
\label{eq:def_lines}
S_r(\theta)=\{x+(r\cos(\theta),r\sin(\theta))\ |\ r\in(0,w_1]\}. \\
\end{equation}

The method used to compute $I_1$ depends on how the support of the window function---namely, the disc $B_{w_1}(x) := \{\, y \mid \abs{y-x}\leq w_1 \,\}$ centered at $x$ with radius $w_1$---intersects the boundary $\partial\Omega$ of the domain $\Omega$. In this section, we discuss the three possible cases, depicted in Figure~\eqref{fig:threeintersections_illustration}, namely (i)~$B_{w_1}(x)\cap \partial\Omega = \emptyset$; (ii)~$B_{w_1}(x)\cap \partial\Omega$ is non-empty and it contains no corner points; and, (iii)~$B_{w_1}(x)\cap \partial\Omega$ contains a corner point. 
\begin{figure}[ht]
\centering
\begin{subfigure}[t]{0.32\textwidth}
    \centering
    \includegraphics[width=\linewidth]{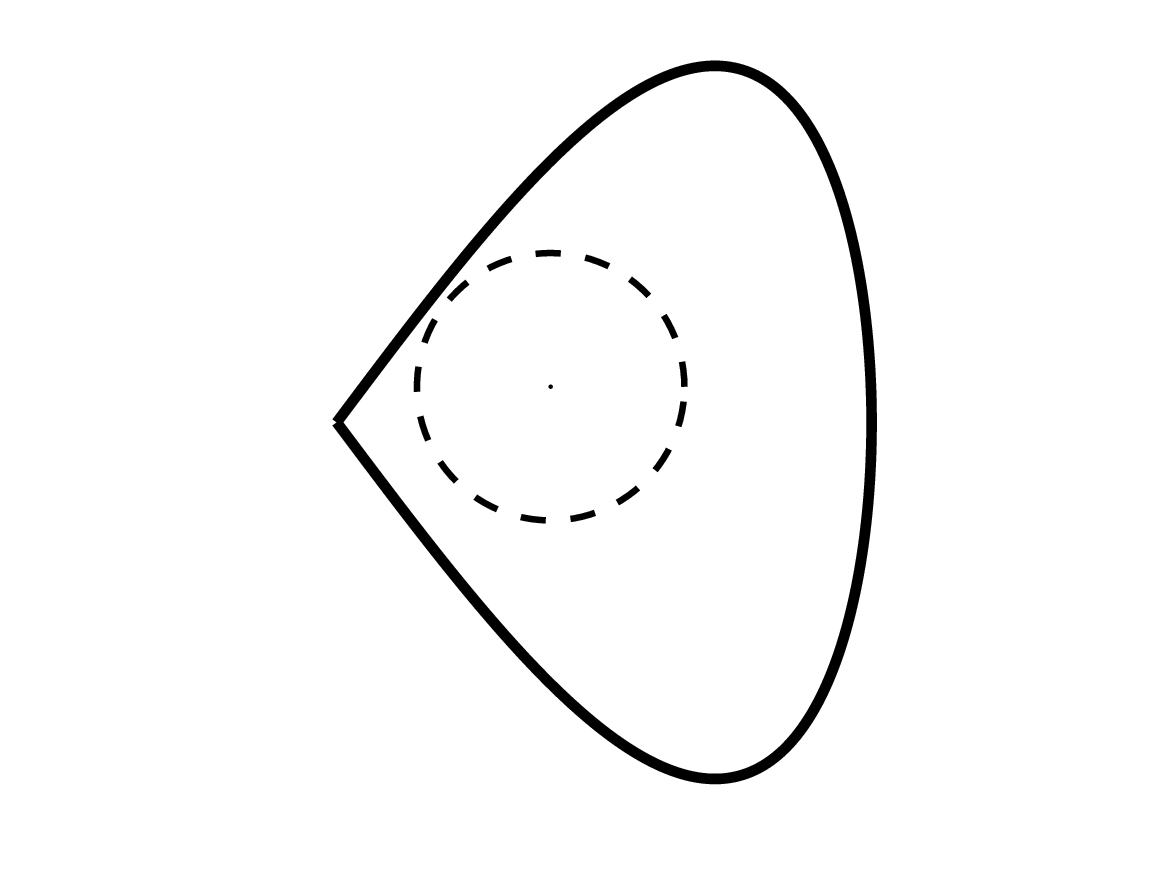}
     \caption{$B_{w_1}(x)\cap \partial\Omega = \emptyset$.}
     \label{fig:6-1}
\end{subfigure}
\begin{subfigure}[t]{0.32\textwidth}
    \centering
    \includegraphics[width=\linewidth]{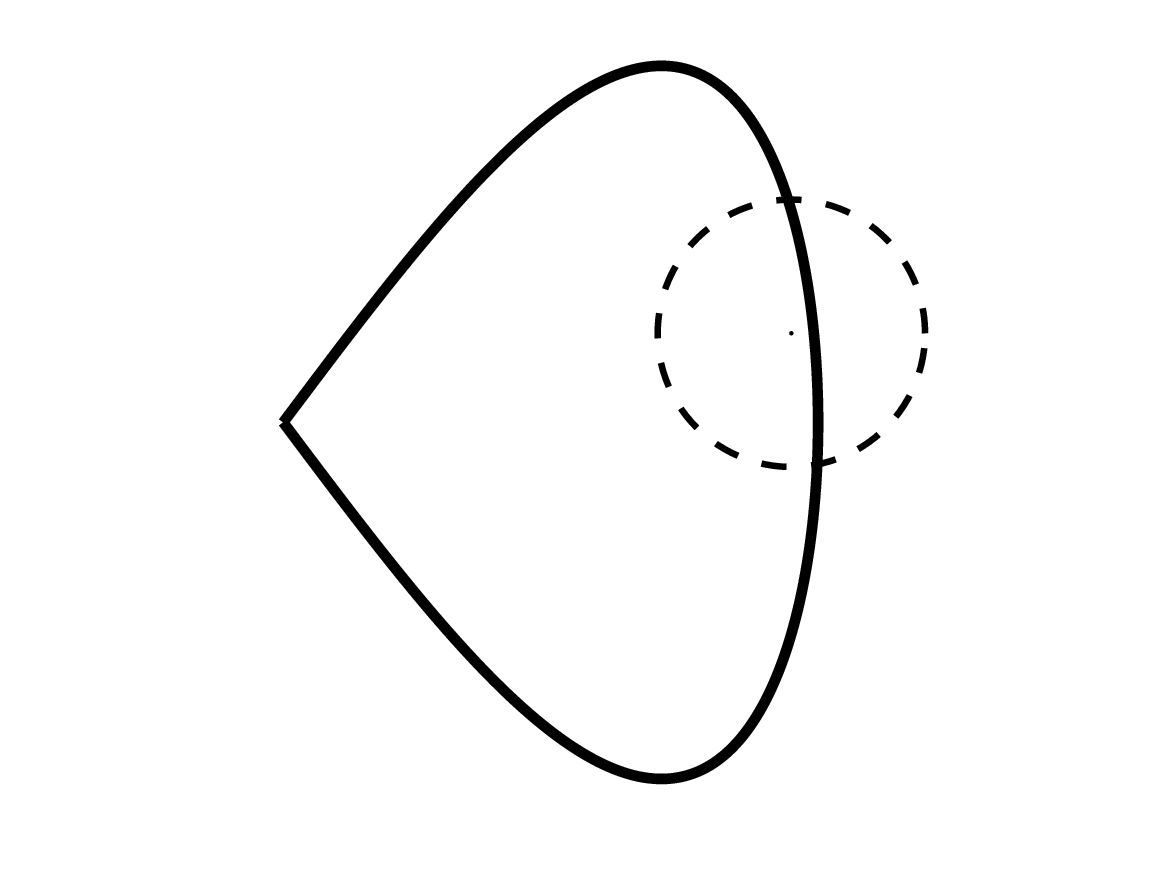}
     \caption{$B_{w_1}(x)\cap \partial\Omega \ne \emptyset$ and the intersection contains no corner points.}\label{fig:6-3}
\end{subfigure}
\begin{subfigure}[t]{0.32\textwidth}
    \centering
    \includegraphics[width=\linewidth]{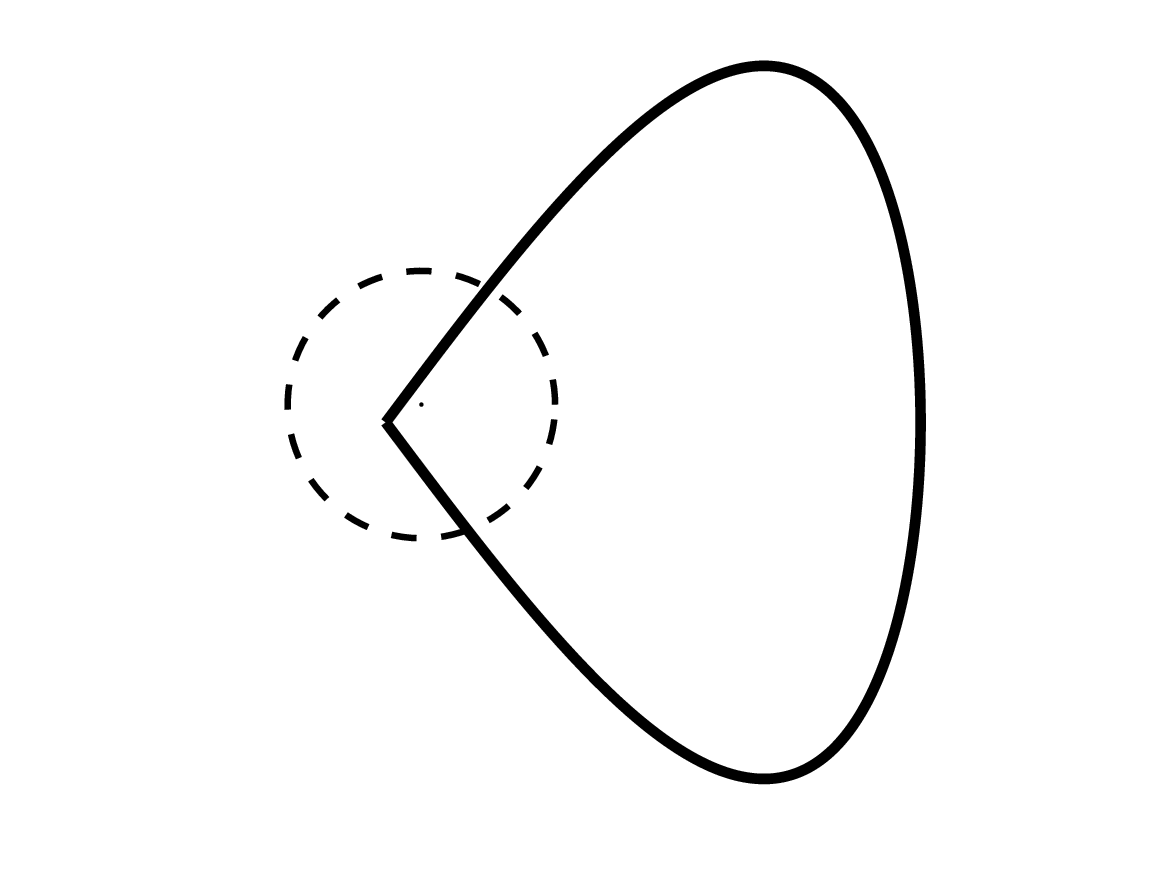}
     \caption{$B_{w_1}(x)\cap \partial\Omega \ne \emptyset$ contains a corner point.}\label{fig:6-5}
\end{subfigure}
    \caption{Three intersection cases considered in this section.}
    \label{fig:threeintersections_illustration}
\end{figure} 

We first consider the situation depicted in Figure \ref{fig:6-3} and described in point~(ii) above, for which $d(\theta)$ is a continuous but non-smooth function, with discontinuous first order derivatives. (The case of point~(i) may be regarded as the special situation where $d(\theta)$ is constant in $\theta$.)
In order to design a numerical algorithm based on Lemma \ref{mainlemma} for the evaluation of the integral $I_1$, in the present case we consider the equivalent expression
\begin{equation}
\label{eq:part1method}
        I_1(x)  = \int_{0}^\pi \int_{-d(\theta+\pi)}^{d(\theta)} \abs{r} \log
        \abs{r} \phi(x+r(\cos(\theta),\sin(\theta))) W_1(r)\ \dif r \dif \theta.
\end{equation}
Furthermore, letting
\begin{equation}
\label{eq:log-chi}
        L(r,\theta) = \abs{r}\log\abs{r} \chi_{[-d(\theta+\pi),d(\theta)]}
\end{equation}
and defining
\begin{equation}
\label{eq:I1r}
    I_{1,x}(\theta) = \int_{-w_1}^{w_1} L(r,\theta) \phi(x+r(\cos(\theta),\sin(\theta))) W_1(r) \ \dif r,
\end{equation}
we re-express $I_1$ in the form
\begin{equation}
\label{eq:intI1x}
    I_1(x) = \int_{0}^\pi I_{1,x}(\theta) \dif\theta.
\end{equation}

To proceed, we seek to apply Lemma \ref{mainlemma} for the computation of $I_{1,x}(\theta)$. To this end, we define the truncated Fourier series $L^{F}(r,\theta)$ of the function $L(r,\theta)$, viewed as a periodic function in the $r$-variable over the interval $[-w_1, w_1]$.
    \begin{equation}
    \label{eq:def_of_LF}
        L(r,\theta) = \sum_{n= -\infty}^\infty L_n(\theta) e^{(\mathrm{i}\frac{2\pi n}{P}r)}, \quad L^F(r,\theta) = \sum_{n= -F}^{F-1} L_n(\theta) e^{(\mathrm{i}\frac{2\pi n}{P}r)}\quad \text{with}  P = 2w_1.
    \end{equation}
    where
    \begin{equation}
        L_n(\theta)  = \frac{1}{P} \int_0^{d(\theta)} \log(t) t \exp(-\mathrm{i}2\pi\frac{n}{P}t)\  \mathrm{d}t.
    \end{equation}

The series involves Fourier modes $L_n$ with indices $-F \leq n \leq F-1$, and explicit formulas for the coefficients $L_n$ are provided in Appendix~\ref{sec:Cnrlogr}. By Lemma \ref{mainlemma}, applying the trapezoidal rule \eqref{eq:trapz} to
\begin{equation}
\label{eq:part1method-2}
        \int_{-w_1}^{w_1} L^{F}(r,\theta) \phi(x+r(\cos(\theta),\sin(\theta))) W_1(r)  \ \dif r
\end{equation}
with $N = 2F$  yields superalgebraic convergence to $I_{1,x}(\theta)$. 

Since the window function is smooth and vanishes at the boundary, and the intersection of its support with the boundary is also smooth, we have the following lemma.
\begin{lemma}
\label{lem:I1x}
    Let $x\in\Omega$ be such that $B_{w_1}(x)\cap \partial\Omega$ contains no corner points, where $w_1>0$ denotes the width of the window function~\eqref{eq:windowfunction}. Then, the  integral $I_{1,x}(\theta)$ is an infinitely differentiable  periodic function of $\theta$ of periodicity $\pi$.
\end{lemma}
\begin{proof}
   Defining
   \begin{equation}
        \widetilde{I}_{1,x}(\theta): = \int_0^{d(\theta)} \log \abs{r} \phi(x+r(\cos(\theta),\sin(\theta))) W_1(r)r \ \dif r,
    \end{equation}
  and in view of the relation
    \begin{equation}\label{eq:theta_p_pi}
        I_{1,x}(\theta) = \widetilde{I}_{1,x}(\theta) + \widetilde{I}_{1,x}(\theta+\pi)
    \end{equation}
to establish the lemma it suffices to show that $\widetilde{I}_{1,x}(\theta)$ is smooth function of $\theta$ for $0\leq \theta\leq 2\pi$. (The $\pi$-periodicity of $I_{1,x}(\theta)$ follows from the $2\pi$-periodicity of $d(\theta)$ together with equation~\eqref{eq:theta_p_pi}.)
To do this, let $\theta_i\in  [0,2\pi)$, $i=1,\dots,p$, denote the values of $\theta$ that correspond to the points of intersection of $\partial B_{w_1}(x)$ and $\partial\Omega$ at which $\partial B_{w_1}(x)$ and $\partial\Omega$ are not mutually tangent. Clearly, the distance function $d(\theta)$ is smooth on $[0,2\pi) \setminus \{\theta_i\}_{i=1,\dots,p}$, and, thus, so is $\widetilde{I}_{1,x}(\theta)$. It therefore suffices to show that $\widetilde{I}_{1,x}(\theta)$ is infinitely differentiable at the intersection angles $\theta_i$---at which $d(\theta)$ is continuous but not differentiable, although it is infinitely differentiable from the left and from the right. 

 Letting
 \begin{equation}
        \psi(r,\theta) : = \log \abs{r} \phi(x+r(\cos(\theta),\sin(\theta))) W_1(r)r,
    \end{equation}
 to establish the infinite differentiability of $\widetilde{I}_{1,x}(\theta)$ at $\theta = \theta_i$ we first note  that, since $d(\theta_i) = w_1$ and since, by definition, $W_1$ and all its derivatives vanish at $r = w_1$, it follows $\psi(r,\theta)$ and all its derivatives vanish at $r = d(\theta_i)$. Thus, to complete the proof of the lemma it suffices to establish the following statement:
\begin{quote}
     ``Let $n\in\mathbb{N}$. Then, for every function $\psi(r,\theta)$ that is infinitely differentiable in a neighborhood of $(d(\theta_i),\theta_i)$ with all its derivatives vanishing at $(d(\theta_i),\theta_i)$, the integrated function $\int_0^{d(\theta)} \psi(r,\theta) \dif r$ is $n$-times continuously differentiable at $\theta = \theta_i$ and 
     \begin{equation}\label{eq:ind-form}
        \frac{d^{n}}{d\theta^{n}}  \int_0^{d(\theta)} \psi(r,\theta) \dif r  = \int_0^{d(\theta)} \frac{\partial^n}{\partial \theta^n} \psi(r,\theta) \dif r+  \Theta_n(\theta),
     \end{equation}
    where $\Theta_n(\theta)$ is infinitely differentiable in a neighborhood of $\theta_i$ and all its derivatives vanish at $\theta = \theta_i$.\mbox{''}
\end{quote}
     The proof of this statement follows by induction. In the case of $n=0$, $\Theta_0(\theta) =0$, and the statement holds in view of the continuity of $d(\theta)$ at $\theta = \theta_i$. Then, assuming the statement is true for $n\in \mathbb{N}$, we have 
     \begin{equation}
        \frac{d^{n+1}}{d\theta^{n+1}}  \int_0^{d(\theta)} \psi(r,\theta) \dif r  =  \frac{d}{d\theta} \int_0^{d(\theta)} \frac{\partial^n}{\partial \theta^n} \psi(r,\theta) \dif r + \frac{d}{d\theta} \Theta_n(\theta).
     \end{equation}
     Employing the Leibniz integral differentiation rule we then obtain
     \begin{equation}
        \frac{d}{d\theta} \int_0^{d(\theta)} \frac{\partial^n}{\partial \theta^n} \psi(r,\theta) \dif r  =  \int_0^{d(\theta)} \frac{\partial^{n+1}}{\partial \theta^{n+1}} \psi(r,\theta) \dif r+  \frac{\partial^n}{\partial \theta^n} \psi(d(\theta),\theta)\frac{\dif}{\dif \theta} d(\theta).
     \end{equation}
     It follows that
     \begin{equation}
         \frac{d^{n+1}}{d\theta^{n+1}}  \int_0^{d(\theta)} \psi(r,\theta) \dif r   =  \int_0^{d(\theta)} \frac{\partial^{n+1}}{\partial \theta^{n+1}} \psi(r,\theta) \dif r+  \frac{\partial^n}{\partial \theta^n} \psi(d(\theta),\theta) \frac{\dif}{\dif \theta} d(\theta) +  \frac{d}{d\theta}\Theta_n(\theta),
     \end{equation}
     which coincides with~\eqref{eq:ind-form} provided
     \begin{equation}
        \Theta_{n+1}(\theta) :=  \frac{\partial^n}{\partial \theta^n} \psi(d(\theta),\theta)\frac{\dif}{\dif \theta} d(\theta) + \frac{d}{d\theta} \Theta_n(\theta).     
     \end{equation}
     The infinitely differentiability of $\Theta_{n+1}(\theta)$, with vanishing derivatives at $\theta = \theta_i$ follows by inductive hypothesis, since $d(\theta)$ is infinitely differentiable from the left and from the right at $\theta = \theta_i$ and since, by hypothesis, all of the derivatives of $\psi(r,\theta)$ vanish at that point. The proof is thus complete.\end{proof}
     
     
     

In view of Lemma~\ref{lem:trapz}, the trapezoidal rule in the variable $\theta$ evaluates the integral~\eqref{eq:intI1x} with superalgebraically small errors. Together with Lemma~\ref{mainlemma}, which ensures accurate evaluation of $I_{1,x}(\theta)$ for each $\theta$, this leads to the superalgebraically convergent TFF-based quadrature rule for the 2D integral~\eqref{eq:part1method}---as stated in Theorem~\ref{thm:quadratureI_smooth}. In detail, using~\eqref{eq:def_of_LF} to define the function
\begin{equation}
\label{eq:Phi}
\Phi^F: \mathbb{R}^2 \to \mathbb{C}, \quad
\Phi^F(r,\theta) = L^{F}(r,\theta),\phi\big(x+r(\cos\theta,\sin\theta)\big),W_1(r)
\end{equation}
and employing the notation~\eqref{eq:2dtrapz} for the two-dimensional trapezoidal quadrature rule, the proposed TFF quadrature rule for the integral $I_1(x)$---in the case where $B_{w_1}(x)\cap \partial\Omega$ contains no corner points---is given by
\begin{equation}
        \label{eq:quadratureI_smooth}
        Q_{N,\left[0,\pi\right]}^2\left[Q_{N,\left[-w_1,w_1\right]}^1\left[\Phi^F\right]\right].
\end{equation}
The convergence properties of this quadrature rule are established in the following theorem.
\begin{theorem}
    \label{thm:quadratureI_smooth}
    Let $F \in \mathbb{N}$ and $N = 2F$. and $x\in\Omega$, and assume $B_{w_1}(x)\cap \partial\Omega$ contains no corner points. Then, the TFF quadrature rule~\eqref{eq:quadratureI_smooth} converges superalgebraically fast to $I_1(x)$.
\end{theorem}
\begin{proof}
The error of the TFF quadrature is given by
\begin{equation}
\label{eq:estimate2}
\begin{split}
      & \abs{I_1(x) - Q_{N,\left[0,\pi\right]}^2\left[Q_{N, \left[-w_1,w_1\right]}^1\left[\Phi^F(r,\theta)\right]\right]} \\
    = & \abs{\int_0^\pi I_{1,x}(\theta) \dif \theta - Q_{N,\left[0,\pi\right]}^2\left[Q_{N, \left[-w_1,w_1\right]}^1\left[\Phi^F(r,\theta)\right]\right]} \\
    = & \abs{\int_0^\pi I_{1,x}(\theta) \dif \theta - Q_{N,\left[0,\pi\right]}^2\left[Q_{N, \left[-w_1,w_1\right]}^1\left[\Phi^F(r,\theta)\right]- I_{1,x}(\theta)+ I_{1,x}(\theta)\right]} \\
    = & \abs{\int_0^\pi I_{1,x}(\theta) \dif \theta - Q_{N,\left[0,\pi\right]}^2\left[I_{1,x}(\theta)\right] - Q_{N,\left[0,\pi\right]}^2\left[Q_{N, \left[-w_1,w_1\right]}^1\left[\Phi^F(r,\theta)\right]- I_{1,x}(\theta)\right]} \\
    \leq & \abs{\int_0^\pi I_{1,x}(\theta) \dif \theta - Q_{N,\left[0,\pi\right]}^2\left[I_{1,x}(\theta)\right]} +  \abs{Q_{N,\left[0,\pi\right]}^2\left[I_{1,x}(\theta)-Q_{N, \left[-w_1,w_1\right]}^1\left[\Phi^F(r,\theta)\right]\right]}
\end{split}
\end{equation}
By Lemma \ref{lem:trapz}, for the first term of on the right hand of this equation we have
\begin{equation}
\label{eq:geo}
    \abs{\int_0^\pi I_{1,x}(\theta) \dif \theta - Q_{N,\left[0,\pi\right]}^2\left[I_{1,x}(\theta)\right]}   \leq \pi\left | \sum_{j\in N\mathbb{Z}\setminus \{0\}}\eta_{j}\right| ,
\end{equation}
where $\eta_{j}$ is the $j$-th Fourier coefficient of $I_{1,x}(\theta)$. But, in view of the smoothness and periodicity of the function $I_{1,x}(\theta)$ established in Lemma \ref{lem:I1x}, the Fourier coefficients $\eta_{j}$ decay superalgebraically fast---that is, for  any $q\in\mathbb{N}$ we have $|\eta_j|\leq C_q j^{-q}$ for some constant $C_q$. It follows that, for any $q\in\mathbb{N}$, the quantity~\eqref{eq:geo} is bounded by $C_q N^{-q}\sum_{i =1}^\infty i^{-q}$ and the first term on the right hand side therefore tends to zero superalgebraically fast. Concerning the second right-hand term we first note that, by Lemma \ref{mainlemma}, we have the bound
\begin{equation}
   \abs{ I_{1,x}(\theta)-Q_{N, [-w_1,w_1]}^1\left[\Phi^F(r,\theta)\right]} = O(F^{p+1})
\end{equation}
for any $p\in\mathbb{N}$. The application of the trapezoidal rule with $N = 2F$ points can only increase the error by a factor proportional to $N$, and, thus the second right-hand term is superalgebraically small as well. The proof is thus complete.
\end{proof}

\begin{figure}[ht]
    \centering
\includegraphics[width=\linewidth]{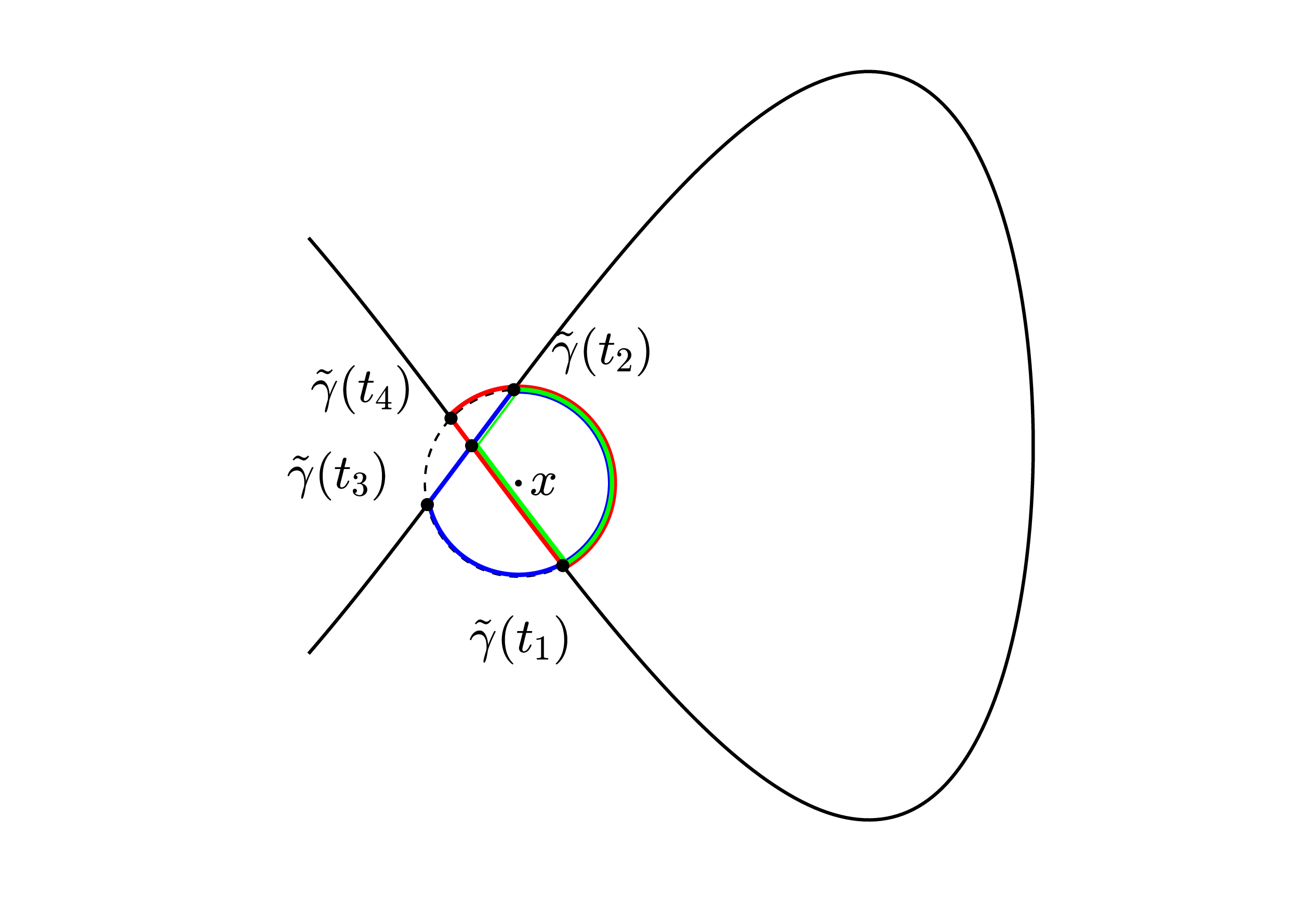}
    \caption{Illustration of the method used when the window function intersects with a non-smooth corner (case (iii))}
    \label{fig:method3}
\end{figure}

We now consider the case where $B_{w_1}(x)\cap \partial\Omega$ contains a corner point of $\partial\Omega$. In this situation, the integral $I_{1,x}(\theta)$ is not a smooth function of $\theta$ at the angular value $\theta$ corresponding to the corner point (since, at that point, $d(\theta)$ is not differentiable and the window function does not vanish), and, therefore, Lemma~\ref{lem:I1x} does not directly apply in this case. Consequently,  a direct application of the trapezoidal rule to integrate $I_{1,x}(\theta)$ in the $\theta$ variable results in slow convergence.

We propose to restore fast convergence by incorporating TFF-based quadrature in the {\em angular} direction, in addition to the radial TFF quadrature already employed. To do this we utilize a curve $\widetilde\Gamma$ that extends $\partial\Omega$ past the corner point, as illustrated in Figure~\ref{fig:method3}, by employing the known curve parameterization. For the particular curve considered in this section, we define 
\begin{equation}
\widetilde{\gamma}(t):=(l_x\sin(t/2),-l_y\sin(t)),\qquad 
    \widetilde{\Gamma} = \{\widetilde{\gamma}(t)\ |\ t\in [-\delta,2\pi+\delta]\},
\end{equation}
where $\delta$ is selected appropriately, so that the circle $B_{w_1}(x)$ intersects  $\widetilde{\Gamma}$ at four points $\widetilde{\gamma}(t_1)$, $\widetilde{\gamma}(t_2)$, $\widetilde{\gamma}(t_3)$ and $\widetilde{\gamma}(t_4)$ with $t_1,t_2\in [0,2\pi]$ and $t_3\in [2\pi,2\pi+\delta]$ and $t_4\in [-\delta,0]$. The parameter value corresponding to the corner point, in turn, is denoted by $t_0$ and the angular coordinate of the point $\widetilde{\gamma}(t_0)$ in the polar coordinates system centered at $x$ is denoted by $\theta^x_0\in(-\pi,\pi]$. Further, for $j=1,2,3,4$, let $\theta_j^x$ denote the angular coordinate of the point $\widetilde{\gamma}(t_j)$ with angular range $\theta_0^x \leq \theta_j^x \leq \theta_0^x + 2\pi$. 

Using this notation,  to facilitate the application of the TFF method in this context we note that the inner integral in~\eqref{eq:1.1polar} is a $2\pi$-periodic function of $\theta$, and that, therefore $I_{1}(x)$ can be re-expressed in the form
\begin{equation}
\label{eq:I_1theta0}
I_{1}(x)   = \int_{\theta_0^x}^{\theta_0^x+2\pi} \dif \theta \int_{0}^{d(\theta)} r \log \abs{r} \phi(x+r(\cos(\theta),\sin(\theta))) W_1(r) \  \dif r,
\end{equation}
and, further,
\begin{equation}\label{eq:sum12}
     I_1(x)  = {I_{1}^1}(x)  + {I_{1}^2}(x),
\end{equation}
where
\begin{equation}\label{eq:red_blue}
\begin{split}
    & {I_{1}^1}(x)   = \int_{\theta_0^x}^{\theta_0^x+2\pi} \chi (\theta;\theta_0^x,\theta_2^x) \dif \theta\int_{0}^{d(\theta)} r \log \abs{r} \phi(x+r(\cos(\theta),\sin(\theta))) W_1(r) \ \dif r,\\
    & {I_{1}^2}(x) = \int_{\theta_0^x}^{\theta_0^x+2\pi} (1-\chi (\theta;\theta_0^x,\theta_2^x)) \dif \theta\int_{0}^{{d(\theta)}} r \log \abs{r} \phi(x+r(\cos(\theta),\sin(\theta))) W_1(r) \ \dif r .
\end{split}
\end{equation}
This setup is depicted in Figure~\ref{fig:method3}, wherein the upper limit of integration along the radial coordinate in the inner integrals in~\eqref{eq:I_1theta0} and~\eqref{eq:red_blue} is represented in green. 

Unfortunately the $\theta$ integrals in~\eqref{eq:red_blue} cannot be treated directly by means of the TFF algorithm since, although they are expressed in terms of characteristic functions, the accompanying function (namely, the inner integrals in~\eqref{eq:red_blue}) are not smooth functions of $\theta$ at the corner point $\theta =\theta_0^x$---since the function $d(\theta)$ is not smooth at that point. 

To bypass this difficulty we first re-express the integrals~\eqref{eq:red_blue} in terms of integrals with upper limits of integration represented by the red and blue in Figure~\ref{fig:method3}. In detail, defining 
\begin{equation}
    \widetilde{\Gamma}_{(a,b)}= \{\widetilde{\gamma}(t) = (l_x\sin(t/2),-l_y\sin(t))| t\in (a,b)\},\quad a,b\in\mathbb{R},\quad a < b,
\end{equation}
using $S_r(\theta)$~\eqref{eq:def_lines},  letting $d_1(\theta)$ and $d_2(\theta)$ denote  the distance between $x$ and the red and blue curves along the $\theta$ angular direction, that is to say,
\begin{equation}
\label{eq:fred}
    d_{1}(\theta) = 
    \begin{cases}
        \mathrm{dist}(x,S_r(\theta)\cap  \widetilde{\Gamma}_{(t_4,t_1)}), & \theta \in (\theta_4^x,\theta_0^x) \\
        w_1  & \text{otherwise},\\
        \end{cases}
\end{equation}
and
\begin{equation}
\label{eq:fblue}
    d_{2}(\theta) = 
    \begin{cases}
        \mathrm{dist}(x,S_r(\theta)\cap  \widetilde{\Gamma}_{(t_3,t_2)}), & \theta \in (\theta_2^x,\theta_3^x), \\ 
        w_1 & \textrm{otherwise},
    \end{cases}
\end{equation}
and employing the characteristic function $\chi (\theta;\theta_0^x,\theta_2^x):= \chi_{[\theta_0^x,\theta_2^x]}(\theta)$, we re-express the integrals~\eqref{eq:red_blue} in the form
\begin{equation}\label{eq:red_blue_new}
\begin{split}
    & {I_{1}^1}(x)   = \int_{\theta_0^x}^{\theta_0^x+2\pi} \chi (\theta;\theta_0^x,\theta_2^x) \dif \theta \int_{0}^{d_1(\theta)} r \log \abs{r} \phi(x+r(\cos(\theta),\sin(\theta))) W_1(r) \ \dif r.\\
    & {I_{1}^2}(x) = \int_{\theta_0^x}^{\theta_0^x+2\pi} (1-\chi (\theta;\theta_0^x,\theta_2^x)) \dif \theta \int_{0}^{{d_2(\theta)}} r \log \abs{r} \phi(x+r(\cos(\theta),\sin(\theta))) W_1(r) \ \dif r.
\end{split}
\end{equation}
Letting, further
\begin{equation}
\label{eq:log-chi_redblue}
        L_{1}(r,\theta) = \abs{r}\log\abs{r} \chi_{[0,d_1(\theta)]}, \quad
        L_{2}(r,\theta) = \abs{r}\log\abs{r} \chi_{[0,d_2(\theta)]}
\end{equation}
we may re-express \eqref{eq:red_blue_new} as
\begin{equation}\label{eq:red_blue2}
\begin{split}
    & I_1^1(x)  = \int_{-\pi}^\pi \chi (\theta;\theta_0^x,\theta_2^x) \dif \theta  \int_{-w_1}^{w_1} L_{1}(r,\theta) \phi(x+re^{\ii \theta}) W_1(r)\ \dif r\\
    & I_1^2 (x) = \int_{-\pi}^\pi (1-\chi (\theta;\theta_0^x,\theta_2^x)(\theta) ) \dif \theta \int_{-w_1}^{w_1} L_{2}(r,\theta) \phi(x+re^{\ii \theta}) W_1(r)\ \dif r.
\end{split}
\end{equation}
In view of Lemma~\ref{mainlemma}, fast numerical quadrature for the radial integral in~\eqref{eq:red_blue2} is obtained by substituting the functions $L_1(r,\theta)$ and $L_2(r,\theta)$ in the corresponding integrands by the Fourier series expansions
\begin{equation}
    L_{1}^F(r,\theta) = \sum_{n= -\infty}^\infty L_{1,n}(\theta) e^{\mathrm{i}\frac{2\pi n}{P}r}\quad\mbox{and}\quad L_{2}^F(r,\theta) = \sum_{n= -\infty}^\infty L_{2,n}(\theta) e^\frac{2\pi n}{P}r,\quad \mbox{with}\quad P = 2w_1.
\end{equation}
and then applying the trapezoidal-rule, in each case, using $N=2F$ discretization points.
In view of Lemma~\ref{lem:I1x}, further, the inner integrals in~\eqref{eq:red_blue_new}, and, equivalently, the inner integrals in~\eqref{eq:red_blue2}, are smooth functions of $\theta$. We can thus apply Lemma~\ref{mainlemma} once again: by replacing $\chi (\theta;\theta_0^x,\theta_2^x)$ by its truncated Fourier series $\chi^F (\theta;\theta_0^x,\theta_2^x)$, in~\eqref{eq:red_blue2},  fast trapezoidal-rule convergence in the angular integral  is achieved provided a number $N_\theta = 2F_\theta$ of discretization points is used. 

In sum, letting
\begin{equation}
    \Phi_i^F(r,\theta) : = L_i^{F}(r,\theta) \phi(x+r(\cos(\theta),\sin(\theta))) W_1(r),\quad \text{for } i=1,2
\end{equation}
(cf.~\eqref{eq:Phi}), we propose
the corner-point capable TFF quadrature rule for the integral $I_1(x)$, namely
\begin{equation}
        \label{eq:quadratureI_corner}
        I_1(x)\approx Q_{N,[-\pi,\pi]}^2[(1-\chi^F (\theta;\theta_0^x,\theta_2^x))Q_{N, [-w_1,w_1]}^1[\Phi_1^F ]] + Q_{N,[-\pi,\pi]}^2[\chi^F (\theta;\theta_0^x,\theta_2^x) Q_{N, [-w_1,w_1]}^1[\Phi_2^F ]].
\end{equation}

We thus obtain the following theorem, whose proof, which is omitted for brevity, follows analogously to the proof of Theorem~\ref{thm:quadratureI_smooth}.
\begin{theorem}        \label{thm:quadratureI_corner}
    Let $F \in \mathbb{N}$ and $N = 2F$. For $x\in\Omega$, if $B_{w_1}(x)\cap \partial\Omega$ contains a non-smooth corner point, the TFF quadrature converges superalgebraically to $I_1(x)$.
\end{theorem}

\subsection{Computation of \texorpdfstring{$I_2(x)$}: smooth integrand.} 
\label{sec:windowpart2}
This section presents an algorithm for the fast and accurate evaluation of the second part of the convolution integral~\eqref{eq:splitmainintegral}, namely
\begin{equation}
\label{eq:I2}
    I_2(x) = \int_\Omega \log \abs{x-y}\phi(y)(1-W_1(x-y))\mathrm{d}y,
\end{equation}
for all $x\in\Omega$, by means of the FFT algorithm. By employing a translation, if necessary, we assume that the smallest rectangle containing $\Omega$ is centered at the origin, and we introduce a larger zero-centered rectangle
\begin{equation}
\label{eq:sqaureR}
R := [-P_1/2,P_1/2] \times [-P_2/2,P_2/2] \subset \mathbb{R}^2    
\end{equation}
containing $\Omega$ and, we let $\delta >0$ denote the distance between $\partial \Omega$ and $\partial R$. The condition $w_1 < \delta$  guarantees that the support of $W_1(x-y)$ is contained within $R$ for all $y\in \Omega$; in practice  values $P_1$ and $P_2$ (and, thus, of $\delta$) are selected so as to balance the overall size of the rectangle $R$ and the sharpness of the window function $W_1$---since both, large rectangles and sharp window functions lead to the requirement of large numbers of discretization points for a given accuracy. 

Letting $F \in \mathbb{N}$ and $N = 2F$, we obtain the discretization 
\begin{equation}
    y_{k,\ell} = \Bigl(k\frac{P_1}{N},\,\ell\frac{P_2}{N}\Bigr), 
    \qquad k,\ell=-F,\dots,F-1
\end{equation}
of the rectangle $R$. Using methods mentioned in \ref{sec:divergence} we can compute the 2D Fourier coefficients of the characteristic function of $\chi_\Omega$ in the periodicity domain $R$ accurately. Then, calling $\chi_\Omega^{j,k}$ these Fourier coefficients, denoting by
\begin{equation}
\label{eq:fourierchar}
    \chi_\Omega^F(y) = \sum_{j,k=-F}^F \chi_\Omega^{j,k}\,
    e^{\ii j \tfrac{2\pi}{P_1}y_1}\,
    e^{\ii k \tfrac{2\pi}{P_2}y_2}
\end{equation}
the $2F+1$-term truncated Fourier expansion of $\chi_\Omega$, employing the two-dimensional trapezoidal-rule weights 
$w_{k,\ell}$ (equal to one for nodes interior to $R$, 
equal to one half along the edges, 
and one quarter at the corners of $R$), and letting
\begin{equation}
    f^F(y) = \chi_\Omega^F(y)\phi(y),
    \qquad 
    g(y) = \log\abs{y}\,(1-W_1(y)), 
\end{equation}
in view of Theorem \ref{thm:krishna}, we obtain the superalgebraically convergent approximation
\begin{equation}
\label{eq:convsum}
 I_2(y_{m,n}) =    \frac{P_1P_2}{N^2} 
       \sum_{k,\ell=-F}^{F-1} w_{k,\ell}\, f^F(y_{k,\ell})\,g(y_{m,n}-y_{k,\ell}) 
    = \frac{P_1P_2}{N^2} 
       \sum_{k,\ell=-F}^{F-1} w_{k,\ell}\, f^F(y_{k,\ell})\,g(y_{m-k,n-\ell}),
\end{equation}
 of the integral  \eqref{eq:I2} at $x=y_{m,n}$, where the second sum explicitly displays the dependence of the kernel $g$ on $y_{m-k,n-\ell}$. 
 
Clearly, the right-hand sum in~\eqref{eq:convsum} equals the discrete convolution of the sequences 
\[
\{\,w_{k,\ell} f(y_{k,\ell})\,\} 
\quad\text{and}\quad 
\{\,g(y_{k,\ell})\,\}.
\]
By invoking the convolution theorem, this convolution can be computed efficiently by applying the FFT and inverse FFT with zero padding \cite{rao2011fast}.
\subsection{Algorithm and Pseudocode}
The pseudocode presented in Algorithm~\ref{alg:alg_1} incoporates the numerical strategies described in Sections~\ref{sec:windowpart1} and \ref{sec:windowpart2}
\begin{algorithm}
\caption{Truncated Fourier Filtering for Singular-Kernel Convolution}
\label{alg:alg_1}
\begin{algorithmic}[1]

\State Construct the equidistant convolution-integral evaluation grid $\{x_{i,j}\}$ on the rectangle $R$ (eqs.~\eqref{eq:integral} and~\eqref{eq:sqaureR}).

\State Evaluate (accurate values of) the Fourier coefficients of the characteristic function of $\Omega$ and the corresponding truncated Fourier expansion~\eqref{eq:fourierchar}.
\State Evaluate $I_2$ at all grid points $x_{i,j}$ (eq.~\eqref{eq:convsum}) by applying the FFT and inverse FFT with zero padding.

\For{$x_{i,j} \in \Omega$}
\State Initialize $I(x_{i,j}) = 0$.
    \If{$B_{w_1}(x_{i,j}) \cap \partial\Omega = \emptyset$}
        \State Compute $I_1(x_{i,j})$ using the quadrature rule~\eqref{eq:quadratureI_smooth}. 
        \State Update $I(x_{i,j}) \leftarrow I_1(x_{i,j})+I_2(x_{i,j})$
    \ElsIf{$B_{w_1}(x_{i,j}) \cap \partial\Omega \neq \emptyset$ \ \textbf{and} \ the intersection contains no corner points}
        \State Compute the distance function \eqref{eq:distancefunction}.
        \State Compute $I_1(x_{i,j})$ using the quadrature rule~\eqref{eq:quadratureI_smooth}.
        \State Update $I(x_{i,j}) \leftarrow I_1(x_{i,j})+I_2(x_{i,j})$
    \Else 
        \State Compute the distance functions \eqref{eq:fred} and \eqref{eq:fblue}.
        \State Compute $I_1$ using the corner-handling quadrature rule~\eqref{eq:quadratureI_corner}.
        \State Update $I(x_{i,j}) \leftarrow I_1(x_{i,j})+I_2(x_{i,j})$
    \EndIf
\EndFor

\end{algorithmic}
\end{algorithm}

\section{Numerical Results}
\label{sec:numerics}

This section illustrates the behavior of the proposed algorithm on two representative domains $\Omega$, namely, the unit disc
and the drop-shaped domain depicted in Figure~\ref{fig:teardrop}. 
For the first test case,
\[
  \Omega = \{\, y \in \mathbb{R}^2 : \|y\| \le 1 \,\},
\]
we take the periodicity domain
\[
  R = \big[-\tfrac{P}{2},\, \tfrac{P}{2}\big]^2,
\qquad P = 3,
\]
so that, in particular, $\Omega \subset R$.
In this setting, the required Fourier coefficients of $\chi_\Omega$
admit closed-form expressions.
Specifically,
\[
  (\chi_\Omega)_{00} = \frac{\pi}{P^2},
\]
and for $(m,n) \ne (0,0)$,
\begin{equation}\label{eq:F-coeffs}
  (\chi_\Omega)_{mn}
    = \frac{1}{P^{2}}
      \int_{\Omega}
      e^{-\mathrm{i} m \frac{2\pi}{P} y_1
         - \mathrm{i} n \frac{2\pi}{P} y_2}
      \, \mathrm{d}y_1 \mathrm{d}y_2
    = \frac{1}{P^{2}}
      \frac{\pi\,
            J_1\!\left(
              \frac{2\pi}{P} \sqrt{m^{2}+n^{2}}
            \right)}
           {\sqrt{m^{2}+n^{2}}}.
\end{equation}
Here $J_1$ denotes the Bessel function of the first kind. A closed-form expression is also available for the
convolution integral~\eqref{eq:integral} with $\phi(y)=1$ on this domain,
which provides a convenient reference solution for comparison, namely
\begin{equation}
\label{eq:exactcircleres}
    \int_{\Omega}\log\abs{x-y} \dif y = \frac{1}{2}\pi(\norm{x}^2-1)\quad x \in \Omega.\end{equation}
Using the equi-spaced grid 
\begin{equation}
    x_{ij} = (-\frac{P}{2} +i\frac{P}{2N},-\frac{P}{2} +j\frac{P}{2N}),
\end{equation}
on $R$, with $N = 2^k$ ($1\leq k\leq 10$). Algorithm~\ref{alg:alg_1} was applied in this context by employing
the explicit Fourier coefficients~\eqref{eq:F-coeffs} together with the FFT algorithm to produce the necessary values 
\begin{equation}
    \chi_{\Omega}^N(y_1,y_2)= \sum_{n,m = -N/2}^{N/2} (\chi_\Omega)_{mn} e^{\ii m y_1 + \ii n y_2}.
\end{equation}
of the truncated Fourier series $\chi_{\Omega}^N$ can be rapidly evaluated. We analyze first the choice of the numbers of discretization points in radial and angular directions, denoted by $N_r$ and $N_\theta$. These choices impact solely the computation of $I_1(x)$. By fixing $N=2^{11}$ we obtain machine precision for the error in $I_2(x)$. We compare the numerical results with the exact result obtained using \eqref{eq:exactcircleres} and obtain the following table of errors. 

\begin{table}[ht]
\centering
\caption{Error table for $I(x=(0.75,0.50))$ with $\phi = 1$. 
Here we choose the window widths $(w_1,w_0)=(1/2,1/6)$ and $N = 2^{11}$.}
\resizebox{\linewidth}{!}{
\begin{tabular}{c|cccccccc}
\hline
$N_\theta \backslash N_r$
 & $2^5$ & $2^6$ & $2^7$ & $2^8$ & $2^9$ & $2^{10}$ & $2^{11}$ & $2^{12}$ \\
\hline

$2^5$ &
$1.2\times10^{-4}$ & $4.3\times10^{-6}$ &
$6.9\times10^{-6}$ & $6.7\times10^{-6}$ & $6.7\times10^{-6}$ &
$6.7\times10^{-6}$ & $6.7\times10^{-6}$ & $6.7\times10^{-6}$ \\

$2^6$ &
$1.2\times10^{-4}$ & $1.1\times10^{-5}$ &
$2.2\times10^{-7}$ & $3.2\times10^{-8}$ & $3.3\times10^{-8}$ &
$3.3\times10^{-8}$ & $3.3\times10^{-8}$ & $3.3\times10^{-8}$ \\

$2^7$ &
$1.2\times10^{-4}$ & $1.1\times10^{-5}$ &
$1.9\times10^{-7}$ & $1.5\times10^{-9}$ & $2.0\times10^{-9}$ &
$2.0\times10^{-9}$ & $2.0\times10^{-9}$ & $2.0\times10^{-9}$ \\

$2^8$ &
$1.2\times10^{-4}$ & $1.1\times10^{-5}$ &
$1.9\times10^{-7}$ & $4.7\times10^{-10}$ & $1.2\times10^{-11}$ &
$1.1\times10^{-11}$ & $1.1\times10^{-11}$ & $1.1\times10^{-11}$ \\

$2^9$ &
$1.2\times10^{-4}$ & $1.1\times10^{-5}$ &
$1.9\times10^{-7}$ & $4.7\times10^{-10}$ & $1.2\times10^{-11}$ &
$1.1\times10^{-11}$ & $1.1\times10^{-11}$ & $1.1\times10^{-11}$ \\

$2^{10}$ &
$1.2\times10^{-4}$ & $1.1\times10^{-5}$ &
$1.9\times10^{-7}$ & $4.7\times10^{-10}$ & $1.2\times10^{-11}$ &
$1.1\times10^{-11}$ & $1.1\times10^{-11}$ & $1.1\times10^{-11}$ \\

$2^{11}$ &
$1.2\times10^{-4}$ & $1.1\times10^{-5}$ &
$1.9\times10^{-7}$ & $4.7\times10^{-10}$ & $1.2\times10^{-11}$ &
$2.3\times10^{-15}$ & $2.2\times10^{-15}$ & $2.1\times10^{-15}$ \\

$2^{12}$ &
$1.2\times10^{-4}$ & $1.1\times10^{-5}$ &
$1.9\times10^{-7}$ & $4.7\times10^{-10}$ & $1.2\times10^{-11}$ &
$2.1\times10^{-15}$ & $2.2\times10^{-15}$ & $2.1\times10^{-15}$ \\
\hline
\end{tabular}
}
\end{table}

By sweeping through the parameters, we display the following table denoting the minimal parameter triples needed to achieving a certain prescribed error.
\begin{table}[ht]
\centering
\caption{Minimal parameter triples $(N,N_r,N_\theta)$ achieving error $<\varepsilon$.}
\resizebox{\linewidth}{!}{
\begin{tabular}{c|cccccccccccc}
\hline
$\varepsilon$ 
& $10^{-3}$  & $10^{-5}$ 
 & $10^{-7}$ & $10^{-9}$  
 & $10^{-14}$ \\
\hline
$(N, N_r, N_\theta)$
& $(2^{6}, 2^{5}, 2^{4})$
& $(2^{8}, 2^{6}, 2^{5})$
& $(2^{9}, 2^{8}, 2^{6})$
& $(2^{10}, 2^{8}, 2^{8})$
& $(2^{11}, 2^{10}, 2^{10})$ \\
\hline
\end{tabular}
}
\end{table}
In this part of the section, we present numerical results for the drop shape domain. Its boundary is defined using the curve as presented in Figure \ref{fig:teardrop}.
\begin{equation}
    \{(3\sin(t/2)-\frac{3}{2},-2\sin(t))\ |\ t\in [0,2\pi]\}.
\end{equation}
For specific domains like this, there are no exact solutions. We compare the results with the refined version of our method ($2^{12}$ discretization points in both angular and radial direction for $I_1$ and $2^{12}\times 2^{12}$ mesh for $I_2$) as a comparison and report the convergence analysis and evaluation results in Figure \ref{fig:conv_dropshape}
\begin{figure}[ht]
\centering
\begin{subfigure}[b]{0.45\textwidth}
    \centering
    \includegraphics[width=\linewidth]{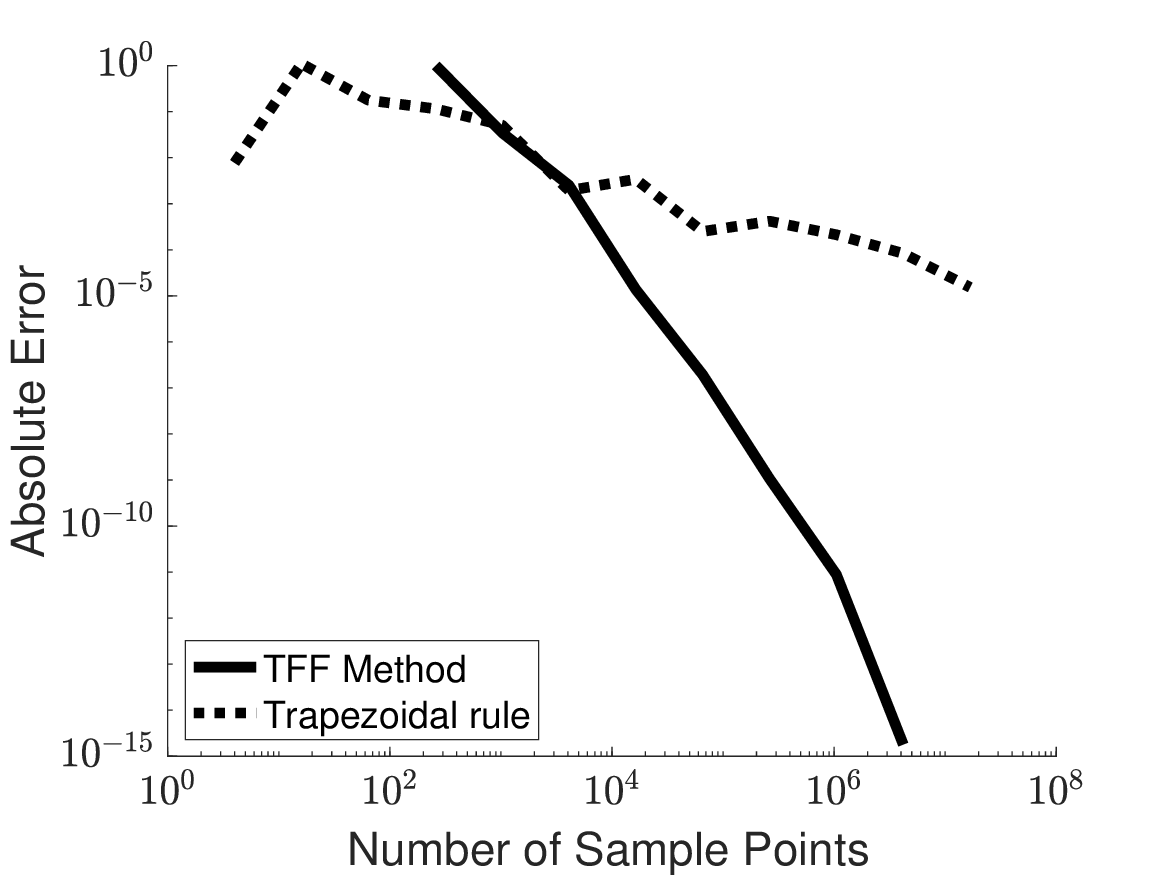}
     \caption{Convergence analysis for an $x$, where the window function lies entirely in the domain.}
\end{subfigure}
\begin{subfigure}[b]{0.45\textwidth}
    \centering
    \includegraphics[width=\linewidth]{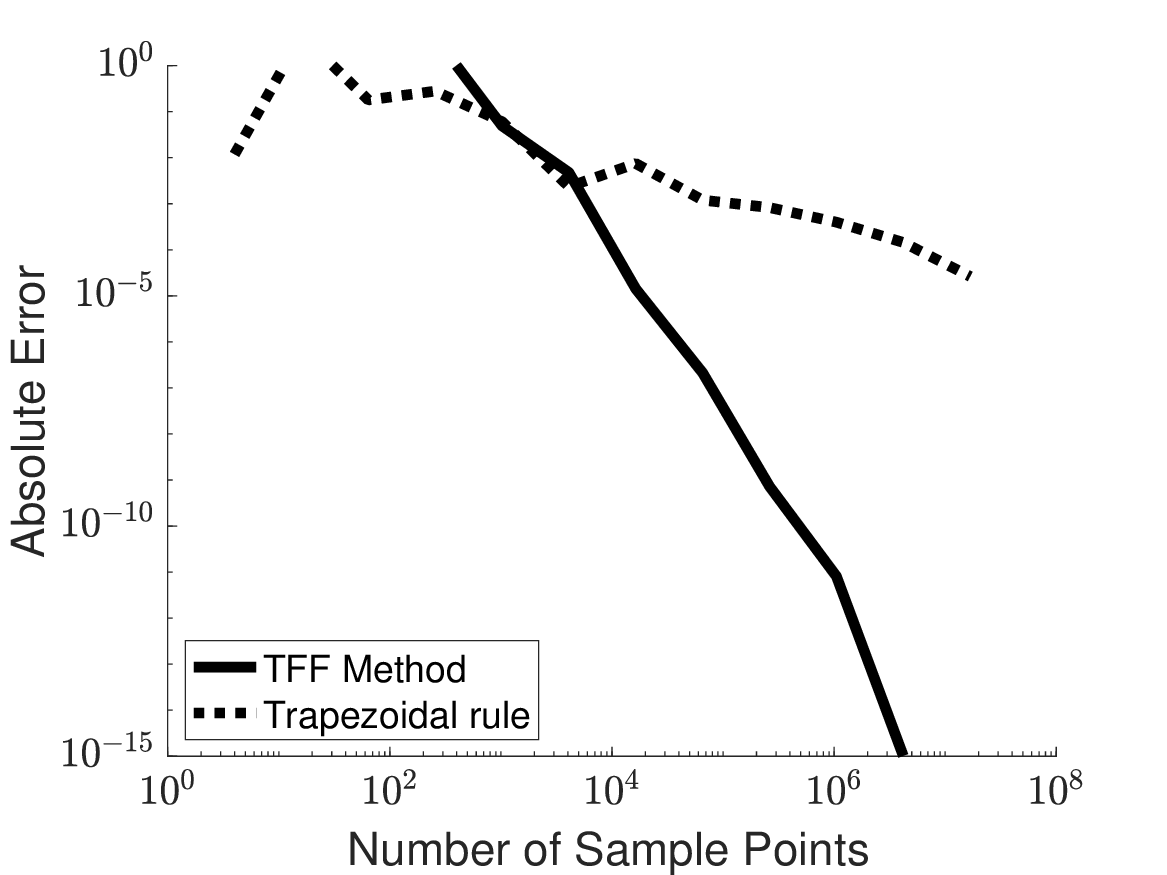}
    \caption{Convergence analysis for an $x$, where the window function intersections with the non-smooth corner of the domain.}
\end{subfigure}

    \caption{Convergence analysis for $I_2(x)$ with increasing number of equidistant sample points. Target value $2^{12}$ discretization point in each direction. Here we take $\phi(y) = \exp(\ii 40 y_1-\ii 20y_2)$.}
    \label{fig:part2convergence}
\end{figure}

\begin{figure}[ht]

    \centering
\begin{subfigure}[b]{0.45\textwidth}
    \centering
    \includegraphics[width=\linewidth]{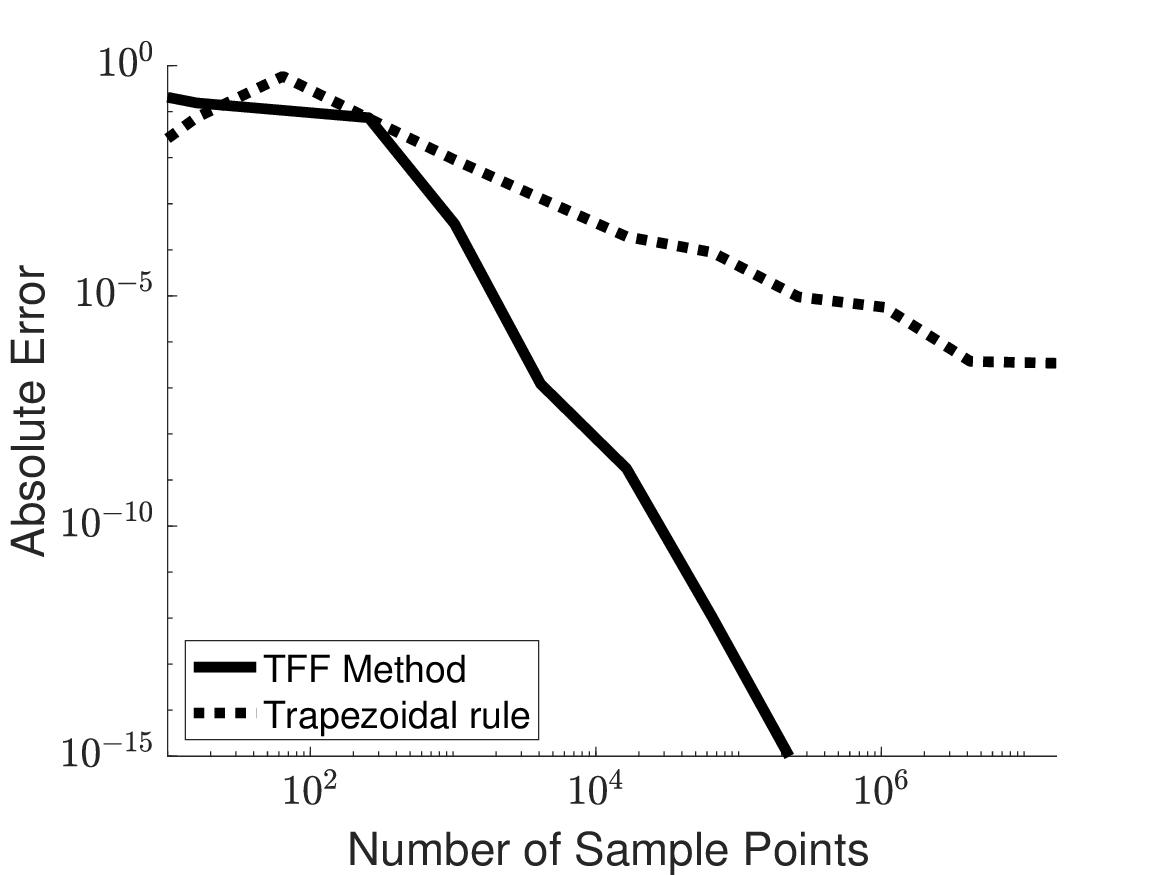}
    \caption{Convergence analysis for an $ x$ where the window function intersects smoothly.}\label{fig:6-4}
\end{subfigure}
\begin{subfigure}[b]{0.45\textwidth}
    \centering
    \includegraphics[width=\linewidth]{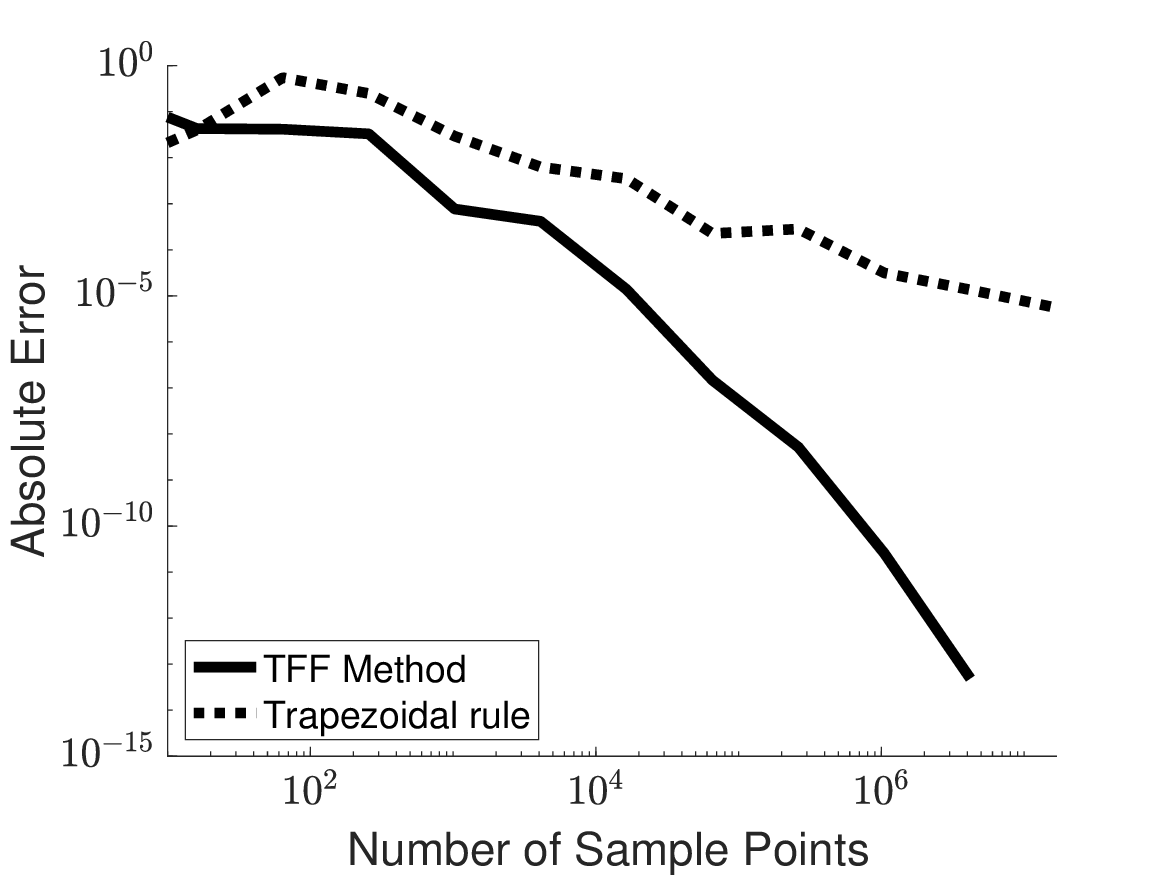}
    \caption{Convergence analysis for an $ x$ where the window function intersects with a corner.}\label{fig:6-6}
\end{subfigure}
    \caption{Convergence analysis for $I_1(x)$ with increasing $N_\theta = N_r$. Number of sample points is the product. Target value $2^{12}$ discretization point in each direction. Here we take $\phi(y) = \exp(\ii 40 y_1-\ii 20y_2)$.}
    \label{fig:threeintersections}
\end{figure}
\begin{figure}[ht]
    \centering
    \includegraphics[width=0.75\linewidth]{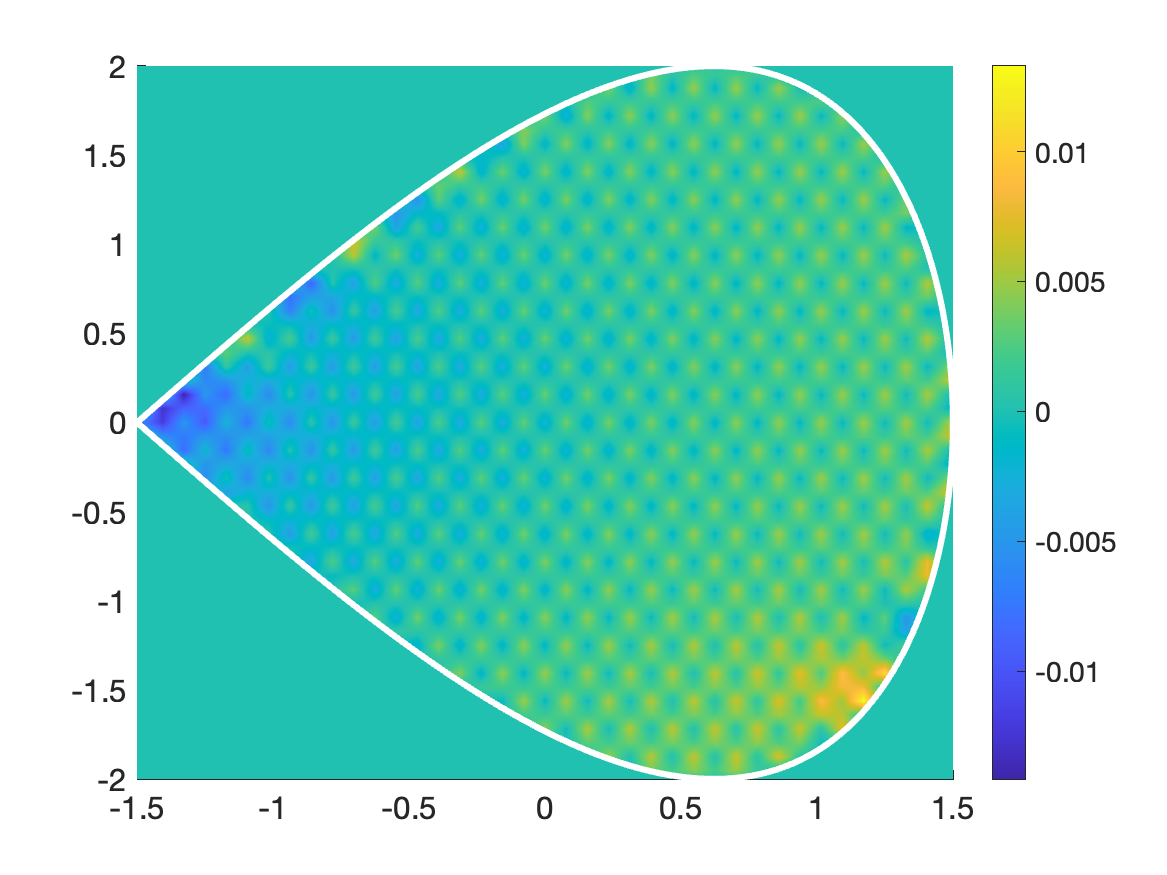}
    \caption{Numerical results for \eqref{eq:integral} in a drop-shaped domain with discretization parameters $N=2^9, N_\theta = 2^9$ and $N_r = 2^8$. Here we take $\phi(y) = \exp(\ii 40 y_1-\ii 20y_2)$. }
    \label{fig:conv_dropshape}
\end{figure}

In Figure \ref{fig:starring} we compute Equation \eqref{eq:integral} for the \emph{star-ring-shaped} domain considered in \cite{fryklund2023fmm}, whose inner and outer boundaries are given by the following perturbed polar parameterization
\begin{equation}
    \big(a_{\mathrm{o}}R_{\mathrm{o}}(\theta)\cos\alpha_{\mathrm{o}}(\theta),\ 
   a_{\mathrm{o}}R_{\mathrm{o}}(\theta)\sin\alpha_{\mathrm{o}}(\theta)\big)
\quad\text{and}\quad
\big(a_{\mathrm{i}}R_{\mathrm{i}}(\theta)\cos\alpha_{\mathrm{i}}(\theta),\ 
   a_{\mathrm{i}}R_{\mathrm{i}}(\theta)\sin\alpha_{\mathrm{i}}(\theta)\big),
\end{equation}
where 
\(
\theta\in[0,2\pi], 
R_{\mathrm{o/i}}(\theta)=2+\varepsilon_{R,\mathrm{o/i}}\sin(7\theta)
\)
and 
\(
\alpha_{\mathrm{o/i}}(\theta)=\theta+\varepsilon_{\Phi,\mathrm{o/i}}\sin(7\theta).
\)
\begin{figure}[ht]
    \centering
    \includegraphics[width=0.75\linewidth]{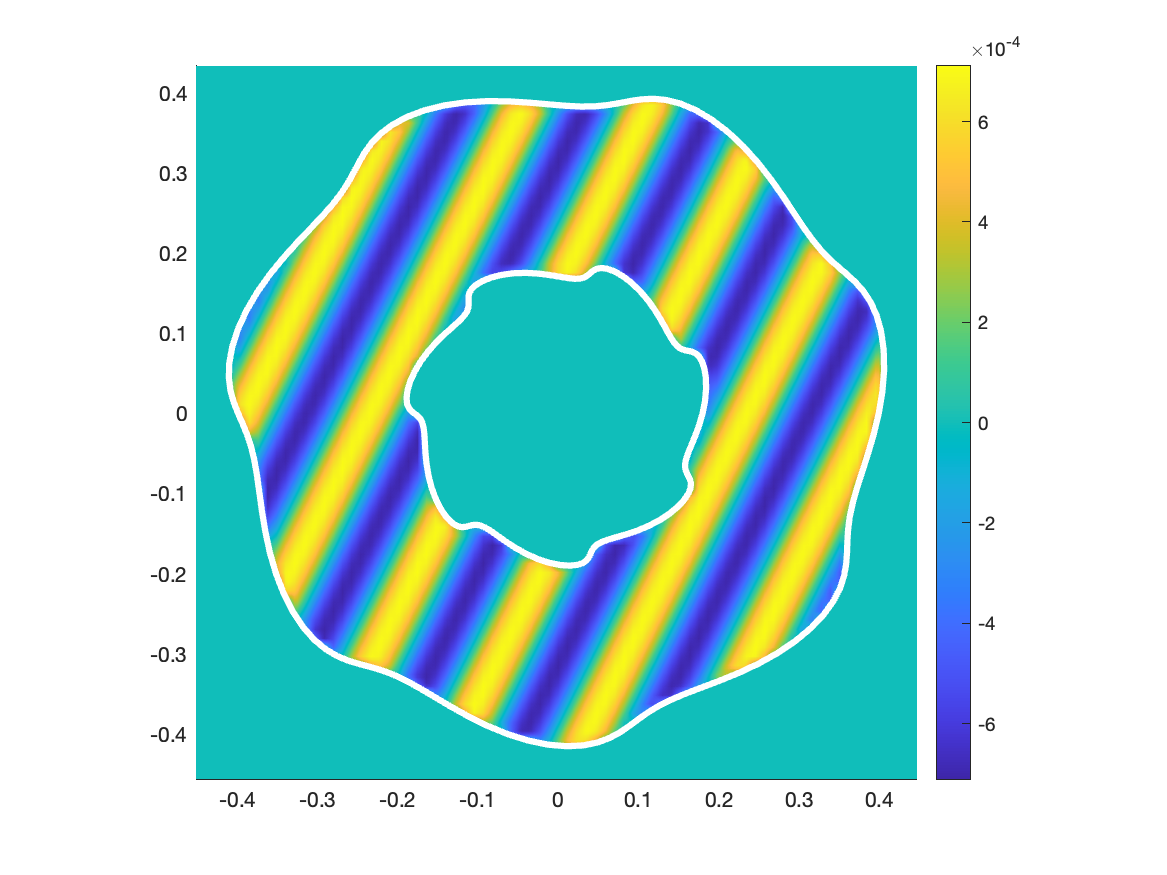}
    \caption{$N=2^8, N_\theta = 2^8, N_r = 2^8$ Error $10^{-10}$ and Runtime $4$ seconds for $\approx 4000$ points inside the domain. Here we take $\phi(y) = \exp(\ii 40 y_1-\ii 20y_2)$. }
    \label{fig:starring}
\end{figure}
\begin{remark}
The timings reported in this section correspond solely to the TFF convolution stage, since the Laplace integral‐equation solver is not accelerated. If an accelerated Laplace solver were used, its computational cost would be negligible compared with that of computing the TFF-based particular convolution solution. Additionally, these results reflect the present Matlab implementation, in spite of which the algorithm results in run times ranging from a fraction of a second to four seconds.
\end{remark}
\newpage
\appendix
\section{Some results on Fourier series}
\subsection{Decay of Fourier coefficients}
\label{sec:decay}
The Fourier series of a periodic function $f$ on an interval $[0,P]$ is defined as
\begin{equation}
    f(x) = \sum_{m\in\mathbb{Z}} f_m e^{\ii \frac{2\pi}{P}x},\quad f_m = \frac{1}{P}\int_{0}^P f(x)e^{-\ii \frac{2\pi}{P}x}\ \dif x.
\end{equation}
Using integration by parts, we can prove the following lemma.
\begin{lemma}
\label{lem1}
    If $f\in C^k$, then we have $\abs{f_m}\leq \frac{\norm{f^{(k)}}}{\abs{m}^k}$.
\end{lemma}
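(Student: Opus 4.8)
The plan is to establish the decay estimate by integrating the defining Fourier-coefficient integral by parts exactly $k$ times, using the periodicity of $f$ to discard every boundary contribution. Writing $\omega = \tfrac{2\pi}{P}$ and recalling that the $m$-th coefficient is
\[
f_m = \frac{1}{P}\int_0^P f(x)\, e^{-\ii \omega m x}\ \dif x,
\]
I would first dispose of the trivial case $m=0$ (for which there is nothing to prove once $k\geq 1$) and assume $m\neq 0$. Since $e^{-\ii\omega m x}$ has antiderivative $-\tfrac{1}{\ii\omega m}e^{-\ii\omega m x}$, one integration by parts yields
\[
f_m = \frac{1}{P}\left[ -\frac{f(x)}{\ii\omega m}\,e^{-\ii\omega m x}\right]_0^P + \frac{1}{\ii\omega m}\cdot\frac{1}{P}\int_0^P f'(x)\,e^{-\ii\omega m x}\ \dif x .
\]
Because $f$ is $P$-periodic and $e^{-\ii\omega m P}=e^{-2\pi\ii m}=1$, the bracketed boundary term cancels, leaving the clean recursion $f_m = \tfrac{1}{\ii\omega m}(f')_m$.

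The next step is to iterate this identity. As $f\in C^k$, each derivative $f',\dots,f^{(k)}$ is again continuous and $P$-periodic, so the boundary terms vanish at every stage of the recursion and after $k$ applications I obtain
\[
f_m = \frac{1}{(\ii\omega m)^k}\,(f^{(k)})_m .
\]
Taking absolute values and bounding the remaining coefficient crudely by $\abs{(f^{(k)})_m}\leq \tfrac{1}{P}\int_0^P \abs{f^{(k)}(x)}\ \dif x \leq \norm{f^{(k)}}$, with $\norm{\cdot}$ the sup (or mean) norm, gives
\[
\abs{f_m} \leq \frac{\norm{f^{(k)}}}{(\omega\abs{m})^k},
\]
which is the claimed bound up to the fixed frequency factor $\omega^k$. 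For the normalized period $P=2\pi$ one has $\omega=1$ and the inequality reads exactly $\abs{f_m}\leq \norm{f^{(k)}}/\abs{m}^k$ as stated; for general $P$ the constant $\omega^{-k}$ is harmless and may be folded into the norm.

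I do not anticipate a genuine obstacle: the argument is a textbook induction. The only point that genuinely requires care is the cancellation of the boundary terms at each of the $k$ steps, which rests squarely on the periodicity hypothesis already built into the definition of the Fourier coefficients, together with the matching of $f^{(j)}(0)$ and $f^{(j)}(P)$ for $0\leq j\leq k$ that follows from viewing $f$ as a $C^k$ function on the circle. The continuity of $f^{(k)}$ guarantees the final integral is finite, so the induction is well posed and the estimate follows.
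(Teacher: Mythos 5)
Your proof is correct and takes essentially the same route as the paper, which establishes Lemma~\ref{lem1} precisely by repeated integration by parts, with periodicity cancelling the boundary terms at each of the $k$ stages. Your side remark that the bound acquires a factor $(P/2\pi)^k$ for a general period---so the stated inequality is exact only for $P=2\pi$ unless the constant is absorbed into the norm---is an accurate observation about the lemma's loose statement, not a flaw in your argument.
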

In higher dimensions, we can use multi-index to express the Fourier coefficients of a periodic function. A generalization of the above lemma can be found in \cite{plonka}.
\begin{lemma} (\cite[Lemma 4.6]{plonka})
\label{lem2}
    Let $r \in \mathbb{N}$ be given. If $f$ and its partial derivatives $D^\alpha f$ are contained in $L_1\left(\mathbb{T}^d\right)$ for all multi-indices $\boldsymbol{\alpha} \in \mathbb{N}_0^d$ with $|\boldsymbol{\alpha}| \leq r$, then it holds for the Fourier coefficients $c_{\mathbf{k}}(f)$:
\begin{equation}
\lim _{\|\mathbf{k}\|_2 \rightarrow \infty}\left(1+\|\mathbf{k}\|_2^r\right) c_{\mathbf{k}}(f)=0    
\end{equation}
\end{lemma}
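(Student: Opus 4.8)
The plan is to derive the result from two classical facts---the integration-by-parts identity relating the Fourier coefficients of $f$ to those of its derivatives, and the Riemann--Lebesgue lemma on the torus---together with an elementary comparison between the Euclidean norm $\norm{\mathbf{k}}_2$ and the monomials $\mathbf{k}^{\boldsymbol\alpha}$. Throughout I write $c_{\mathbf{k}}(g)=\frac{1}{(2\pi)^d}\int_{\mathbb{T}^d} g(\mathbf{x})\,e^{-\ii\mathbf{k}\cdot\mathbf{x}}\ \dif\mathbf{x}$ for $g\in L_1(\mathbb{T}^d)$; the precise normalization constant is irrelevant. The first step is to establish, for every multi-index with $\abs{\boldsymbol\alpha}\le r$, the identity
\[
c_{\mathbf{k}}(D^{\boldsymbol\alpha} f)=(\ii\mathbf{k})^{\boldsymbol\alpha}\,c_{\mathbf{k}}(f),\qquad (\ii\mathbf{k})^{\boldsymbol\alpha}=\prod_{j=1}^d(\ii k_j)^{\alpha_j}.
\]
Because the $D^{\boldsymbol\alpha}f$ are assumed to be genuine (weak) derivatives lying in $L_1(\mathbb{T}^d)$, this follows by testing the defining relation of the weak derivative against the smooth periodic function $\mathbf{x}\mapsto e^{-\ii\mathbf{k}\cdot\mathbf{x}}$; periodicity guarantees that no boundary terms survive. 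This is precisely the multi-dimensional, iterated analogue of the integration by parts already invoked in Lemma~\ref{lem1}.

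Next I would apply the Riemann--Lebesgue lemma: for any $g\in L_1(\mathbb{T}^d)$ one has $c_{\mathbf{k}}(g)\to 0$ as $\norm{\mathbf{k}}_2\to\infty$. Taking $g=D^{\boldsymbol\alpha}f$, which belongs to $L_1$ by hypothesis for each $\abs{\boldsymbol\alpha}\le r$, and combining with the identity above yields
\[
\abs{\mathbf{k}^{\boldsymbol\alpha}}\,\abs{c_{\mathbf{k}}(f)}=\abs{c_{\mathbf{k}}(D^{\boldsymbol\alpha}f)}\longrightarrow 0
\]
as $\norm{\mathbf{k}}_2\to\infty$, for every such $\boldsymbol\alpha$. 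In particular this holds for $\boldsymbol\alpha=\mathbf{0}$ (using $f\in L_1$ itself) and for the $d$ multi-indices $r\mathbf{e}_j$ of order exactly $r$, for which $\mathbf{k}^{r\mathbf{e}_j}=k_j^{\,r}$.

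Finally I would pass from these monomial bounds to the full weight $\norm{\mathbf{k}}_2^r$ via the elementary comparison $\norm{\mathbf{k}}_2^r\le d^{r/2}\norm{\mathbf{k}}_\infty^r\le d^{r/2}\sum_{j=1}^d\abs{k_j}^r$, recognizing each $\abs{k_j}^r=\abs{\mathbf{k}^{r\mathbf{e}_j}}$. This gives
\[
(1+\norm{\mathbf{k}}_2^r)\,\abs{c_{\mathbf{k}}(f)}\le \abs{c_{\mathbf{k}}(f)}+d^{r/2}\sum_{j=1}^d\abs{c_{\mathbf{k}}(D^{r\mathbf{e}_j}f)},
\]
and every term on the right tends to $0$ by the previous step, which proves the claimed limit.

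The main obstacle is the rigorous justification of the derivative identity in the bare $L_1$ setting: the $D^{\boldsymbol\alpha}f$ need not arise from a classically differentiable $f$, so the identity must be read through the definition of weak derivatives rather than through pointwise integration by parts, and one must confirm that the compactness of $\mathbb{T}^d$ leaves no boundary contribution. Once this identity is secured, the remaining ingredients---Riemann--Lebesgue together with the norm comparison---are entirely routine.
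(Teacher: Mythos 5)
Your proof is correct. The paper itself offers no proof of this lemma---it is quoted directly from \cite[Lemma 4.6]{plonka}---and your argument (the identity $c_{\mathbf{k}}(D^{\boldsymbol\alpha}f)=(\ii\mathbf{k})^{\boldsymbol\alpha}c_{\mathbf{k}}(f)$ justified via weak derivatives tested against the smooth periodic exponentials, the Riemann--Lebesgue lemma on $\mathbb{T}^d$ applied to $f$ and to $D^{r\mathbf{e}_j}f$, and the comparison $\norm{\mathbf{k}}_2^r\le d^{r/2}\sum_{j=1}^d\abs{k_j}^r$) is precisely the standard argument underlying the cited reference, including a correct treatment of the one delicate point, namely that the differentiation identity must be read through the definition of weak derivatives, with periodicity of $\mathbb{T}^d$ eliminating all boundary terms.
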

\subsection{Computation of the Fourier series coefficients for \texorpdfstring{$x\log(x)$}{x log(x)}}
\label{sec:Cnrlogr}
This appendix presents an efficient numerical method for the accurate evaluation of the Fourier coefficients 
\begin{equation}\label{eq:Ln}
        L_n  = \frac{1}{P} \int_{0}^{P} \chi_{[0,r]}\log(t) t \exp(-\mathrm{i}2\pi\frac{n}{P}t)\  \mathrm{d}t=  \frac{1}{P} \int_0^{r} \log(t) t \exp(-\mathrm{i}2\pi\frac{n}{P}t)\  \mathrm{d}t.
\end{equation}
in the Fourier expansion
\begin{equation}
        L(x) = \sum_{n= -\infty}^\infty L_n \exp(\mathrm{i}2\pi\frac{n}{P}x),
\end{equation}
of the function $L(x) = x\log(x)\chi_{[0,r]}(x)$ in the periodicity interval $[0,P]$, where $r\in [0,P]$ is a given real number. The right-hand integral in~\eqref{eq:Ln} can be  computed explicitly in terms of the
the exponential integral
\begin{equation}
        \mathrm{Ei}(x) = \int_{-\infty}^x \frac{e^t}{t} \ \dif t,
\end{equation}
indeed, we have
\begin{equation}
        L_n = \frac{1}{P}(\frac{e^{-\mathrm{i} 2\pi\frac{n}{P} r}(\mathrm{i} 2\pi\frac{n}{P} r \log (r)+\log (r)+1)-\operatorname{Ei}(-\mathrm{i} 2\pi\frac{n}{P} r)}{(2\pi\frac{n}{P})^2} -\frac{1-\gamma-\log(-\mathrm{i}2\pi\frac{n}{P})}{(2\pi\frac{n}{P})^2})
\end{equation}
    for $n\neq 0$, and
\begin{equation}
        L_0 = \frac{1}{4P} r^2 (2\log(r)-1).
\end{equation}

Unfortunately, the evaluation of Ei$(r)$ at the large set of $r$ values required per Section~\ref{sec:windowpart1} requires an unacceptable computing cost. The proposed efficient algorithm for the evaluation of the coefficients $L_n(x)$ employs Chebyshev expansions, as described in what follows.    

To derive the desired algorithm we first use integration by parts to re-express the right-hand integral in~\eqref{eq:Ln} in the form
    \begin{equation}
    \label{eq:Cnpi}
   \frac{P}{4\pi^2 n^2}
\left[
\log(r)\left( e^{-\ii\frac{2\pi n}{P} r}\Bigl(1+\ii\frac{2\pi n}{P}r\Bigr) -1 \right)
-\int_0^r \frac{e^{-\ii\frac{2\pi n}{P} t}(1+\ii\frac{2\pi n}{P} t)-1}{t}\,dt
\right], 
\end{equation}
which reduces the evaluation of \eqref{eq:Cnpi} to evaluating the integral
\begin{equation}
        \ell(n) = \int_0^r \frac{e^{-\ii \frac{2\pi n}{P} t} (1+\ii \frac{2\pi n}{P} t)-1}{t} \dif t
\end{equation}
or, using the substitution $s = nt$, the integral
    \begin{equation}
    \label{eq:Cn_cheb}
         \ell(n) = \int_0^{nr} \frac{e^{-\ii  \frac{2\pi}{P} s}(1+\ii \frac{2\pi}{P} s)-1}{s} \dif s,
    \end{equation}
    for $n=1,\dots, F$, This approach thus
     reduces the calculation of the coefficients $L_n$ to the evaluation of the primitive of the integrand in~\eqref{eq:Cn_cheb} at the points $nr$ with $n=1,\dots, F$, all of which lie in the interval $0\leq nr\leq w_1F$.
     
    To evaluate this primitive we thus partition the interval $0\leq s\leq Fw_1$ into a number $m$ of subintervals 
    \begin{equation}
        B_i = \left[(i-1)H,i H\right]\quad 1\leq i\leq m
    \end{equation}
    of length $H = w_1F/m$. Then, for a given point  $q_n=\floor{nr/H}$,  the integral \eqref{eq:Cn_cheb} that we intend to compute becomes
    \begin{equation}
         \ell(n)  =\sum_{i=1}^{q_n} \int_{B_i}   
         \frac{e^{-\ii \frac{2\pi}{P} s}(1+\ii \frac{2\pi}{P} s)-1}{s}  \dif s + \int_{q_n H}^{nr} \frac{e^{-\ii \frac{2\pi}{P} s}(1+\ii \frac{2\pi}{P} s)-1}{s}  \dif s.
         \end{equation}
    Then, defining the change of variables $c_i= \frac{2}{H}(x- H(i-1))-1$ and letting $$h_i(x) =\frac{H}{2}\frac{e^{-\ii \frac{2\pi}{P} c_i(x)}(1+\ii \frac{2\pi}{P} c_i(x))-1}{ c_i(x)}$$ 
    we obtain
    \begin{equation}
    \label{eq:chebycnfinal}
        \ell (n) = \sum_{i=1}^{q_n} \int_{-1}^1 h_i(x)  \dif x + \int_{-1}^{c_{q_n+1}(nr)} h_{q_n+1}(x)  \dif x. \\
    \end{equation}
   Each one of the integrals in~\eqref{eq:chebycnfinal} may be computed effectively by employing the Chebyshev expansions~\cite{trefethen2000spectral}.
    \begin{equation}
    \label{eq:chebsum}
        h_i(x) = \sum_{m\in \mathbb{N}} a_m^i T_m(x),
    \end{equation}
    in terms of the  Chebyshev polynomials of the first kind 
    \begin{equation}\label{Tm}
     T_m(x) = \cos(m\arccos(x)).    
    \end{equation}
    Indeed, inserting  \eqref{eq:chebsum} into \eqref{eq:chebycnfinal} we obtain the expressions
    \begin{equation}
    \label{eq:chebycnfinalsum}
        \ell(n) = \frac{H}{2} \left( \sum_{i=1}^{q_n} \sum_{m\in\mathbb{N}} a_m^i \int_{-1}^1 T_m(x)\dif x +  \sum_{m\in\mathbb{N}} a_m^{q_n+1} \int_{-1}^{c_{q_n+1}(nr)} T_m(x)\dif x\right),
    \end{equation} 
    which can be evaluated by employing the relations
    \begin{equation}
         \int_{-1}^1 T_m(x)\ \dif x= \begin{cases}\frac{(-1)^m+1}{1-m^2} & \text { if } m \neq 1 \\ 0 & \text { if } m=1\end{cases}
    \end{equation}
    and
    \begin{equation}
        \int T_m(x) d x = \frac{1}{2}\left(\frac{T_{m+1}(x)}{m+1}-\frac{T_{m-1}(x)}{m-1}\right)+\text {constant,\ for }m\geq 1.
    \end{equation}
    The necessary expansion coefficients $a_m^i$ and values of Chebyshev polynomials 
    at specific points that are needed to evaluate \eqref{eq:chebycnfinalsum} can be obtained by employing standard numerical techniques for spectral numerical methods. In particular, in view of the relation~\eqref{Tm} the Chebyshev coefficients $a_m^i$ coincide with the coefficients of a cosine expansion, which can be obtained efficiently by employing the Fast Cosine Transform (FCT). The necessary point values, in turn, can be obtained, without recourse to expensive evaluation of the trigonometric functions,  by employing the three-term recursion relation
    \begin{equation}
        T_0(x) = 1,\ T_1(x) = x,\ T_{m+1}(x) = 2x T_m(x) - T_{m-1}(x).
    \end{equation}
\subsection{Fourier series of the characteristic function of a parametrized domain}
\label{sec:divergence}

Fourier series of indicator functions are known in closed form for several simple domains. 
For example, consider the disk 
\[
D_R=\{(y_1,y_2)\in\mathbb{R}^2 : y_1^2+y_2^2\le R^2\}
\]
of radius \(R>0\).  
The Fourier coefficients of its characteristic function on 
\([{-}P/2,P/2]\times[{-}P/2,P/2]\) satisfy  
\begin{equation}
(\chi_{D_R})_{m,n}=
\begin{cases}
\dfrac{\pi R^{2}}{P^{2}}, & m=n=0,\\[6pt]
\dfrac{2\pi\, J_1(\rho_{m,n})}{\rho_{m,n} P^{2}}, 
\quad \rho_{m,n}=\sqrt{\left(\dfrac{2\pi m}{P}\right)^{2}+\left(\dfrac{2\pi n}{P}\right)^{2}},
& \text{otherwise},
\end{cases}
\end{equation}
where \(J_1\) denotes the Bessel function of the first kind.

For a general parametrized domain $\Omega\subset S\subset\mathbb{R}^2$, 
the Fourier coefficients of its indicator function are defined by
\begin{equation}
(\chi_{\Omega})_{m,n}
= \int_{S} \chi_{\Omega}(y)\, e^{\mathrm{i}(m y_1 + n y_2)}\, \mathrm{d}A
= \int_{\Omega} e^{\mathrm{i}(m y_1 + n y_2)}\, \mathrm{d}A.
\end{equation}

If $m=n=0$, we have
\begin{equation}
(\chi_{\Omega})_{0,0} = \int_{\Omega} 1\,\mathrm{d}A = |\Omega|.
\end{equation}

In the case where $m\neq 0$, we apply integration by parts using
\[
\nabla\!\cdot
\begin{pmatrix}
e^{\mathrm{i}(m y_1 + n y_2)}\\[2pt]
0
\end{pmatrix}
= \mathrm{i} m\, e^{\mathrm{i}(m y_1 + n y_2)},
\]
which yields
\begin{equation}
\begin{aligned}
(\chi_{\Omega})_{m,n}
&= \frac{1}{\mathrm{i}m}
   \int_{\Omega}
   \nabla\!\cdot
   \begin{pmatrix}
   e^{\mathrm{i}(m y_1 + n y_2)}\\[2pt]
   0
   \end{pmatrix}
   \mathrm{d}A \\
&= \frac{1}{\mathrm{i}m}
   \int_{\partial\Omega}
   e^{\mathrm{i}(m y_1 + n y_2)}\, \mathrm{d}y_1,
\qquad m\neq 0.
\end{aligned}
\end{equation}

If $\partial\Omega$ admits a smooth parameterization  
$\gamma(t)=(\gamma_x(t),\gamma_y(t))$, $t\in[0,2\pi]$, the expression becomes
\begin{equation}
(\chi_{\Omega})_{m,n}
= \frac{1}{\mathrm{i}m}
  \int_{0}^{2\pi}
  e^{\mathrm{i}\big(m\gamma_x(t)+n\gamma_y(t)\big)}
  \,\gamma_x'(t)\,\mathrm{d}t,
\qquad m\neq 0.
\end{equation}

For $m=0$ but $n\neq 0$, an analogous formula is obtained by integrating 
with respect to $y_2$ instead, giving
\begin{equation}
(\chi_{\Omega})_{0,n}
= \frac{1}{\mathrm{i}n}
  \int_{\partial\Omega}
  e^{\mathrm{i}n y_2}\, \mathrm{d}y_2
= \frac{1}{\mathrm{i}n}
  \int_{0}^{2\pi}
  e^{\mathrm{i}n\gamma_y(t)}\, \gamma_y'(t)\,\mathrm{d}t,
\qquad n\neq 0.
\end{equation}

\printbibliography
\end{document}